\def\titlerunning#1{\gdef\titrun{#1}}
\def\author#1{\gdef\autrun{\def\and{\unskip, }#1}\gdef\@author{#1}}
\def\address#1{{\def\and{\\\hspace*{18pt}}\renewcommand{\thefootnote}{}%
		\footnote {#1}}%
	\markboth{\autrun}{\titrun}}
\def\email#1{e-mail: #1}
\def\subjclass#1{{\renewcommand{\thefootnote}{}%
		\footnote{\emph{Mathematics Subject Classification (2010):} #1}}}
\def\keywords#1{\par\medskip
	\noindent\textbf{Keywords.} #1}
\newtheorem{theorem}{Theorem}[section]
\newtheorem{corollary}[theorem]{Corollary}
\newtheorem{lemma}[theorem]{Lemma}
\newtheorem{proposition}[theorem]{Proposition}
\theoremstyle{definition}
\newtheorem{definition}[theorem]{Definition}
\newtheorem{remark}[theorem]{Remark}
\theoremstyle{example}
\newtheorem{example}[theorem]{Example}
\numberwithin{equation}{section}
\def \M {\mathcal{M}}
\def \a {\alpha }
\def \b {\beta}
\def \de {\delta}
\def \De {\Delta}
\def \la {\lambda}
\def\w {\omega}
\def\Om{\Omega}
\def\pa{\partial}
\def\na {\nabla}
\def\Ga{\Gamma}
\begin{document}
\baselineskip=17pt

\titlerunning{Asymptotic behaviour of instantons on Cylinder Manifolds}
\title{Asymptotic behaviour of instantons on Cylinder Manifolds}

\author{Teng Huang}

\date{}

\maketitle

\address{T. Huang: School of Science, Nantong University, Nantong, Jiangsu, 226007, P. R. China and School of Mathematical Sciences, University of Science and Technology of China, Hefei, 230026, P.R. China; \email{htmath@ustc.edu.cn; htustc@gmail.com}}

\subjclass{58E15;81T13}

\begin{abstract}
In this article, we study the instanton equation on the cylinder over a closed manifold $X$ which admits non-zero smooth $3$-form $P$ and $4$-form $Q$. Our results are (1) if $X$ is a \textbf{good} manifold, i.e., $P,Q$ satisfying $d\ast_{X}P=d\ast_{X}Q=0$, then the instanton with integrable curvature decays exponentially at the ends, and, (2) if $X$ is a real Killing spinor manifold, i.e., $P,Q$ satisfying $dP=4Q$ and $d\ast_{X}Q=(n-3)\ast_{X}P$, we prove that the solution of instanton equation is trivial under some mild conditions.
\end{abstract} 
\keywords{instantons, asymptotic behaviour, special holonomy}
\section{Introduction}
Let $M$ be an oriented smooth $n$-dimensional Riemannian manifold, $G$ be a compact Lie group and $P$ be a principal $G$-bundle on $M$, $\mathfrak{g}_{P}$ be the adjoint bundle of $P$. Let $A$ denote a connection on $P$ with the curvature $F_{A}$. The instanton equation on $M$ can be introduced as follows. Assume there is a $4$-form $\Om$ on $M$, then an $(n-4)$-form $\ast\Om$ exists, where $\ast$ is the Hodge star operator on $M$. A connection $A$ is called an anti-self-dual instanton, when it satisfies the instanton equation
\begin{equation}\label{1.1}
\ast F_{A}+\ast\Om\wedge F_{A}=0
\end{equation}
When $n>4$, these equations can be defined on the manifold $M$ with a special holonomy group, i.e., the holonomy group $G$ of the Levi-Civita connection on the tangent bundle $TM$ is a subgroup of the group $SO(n)$. Each solution of equation(\ref{1.1}) satisfies the Yang-Mills equation. The instanton equation (\ref{1.1}) is also well-defined on a manifold $M$ with non-integrable $G$-structures, but equation (\ref{1.1}) implies that the Yang-Mills equation will have torsion.

Instantons on the higher dimension, proposed in \cite{CDFJ} and studied in \cite{RRC,DT,DS,HN,RSW}, are important both in mathematics \cite{DS,DT} and string theory \cite{Gr,GSW}. In mathematics, the articles of Donaldson-Thomas \cite{DT} and Donaldson-Segal \cite{DS} have inspired a considerable amount of work related to gauge theory in higher dimensional. S\'{a} Earp and Walpuski focus on the study of gauge theory on $G_{2}$-manifolds, they construct $G_{2}$-instanton over some $G_{2}$-manifolds \cite{Earp3,EW,Walpuski}. In string theory, the solutions of the instanton equations on cylinders over nearly K\"{a}hler $6$-manifolds and nearly parallel $G_{2}$-manifolds have been constructed \cite{BILL,HILP,ID,ILPR}. In \cite{ILPR} Section $4$, they confirm that the standard Yang-Mills functional is infinite in their solutions.

In this article, we consider the instanton $\textbf{A}$ on the cylinder manifold over a closed manifold $X$ which admits non-zero smooth $3$-form $P$ and $4$-form $Q$. The $4$-form $\Om$ on $Cyl(X):=(\mathbb{R}\times X, dt^{2}+\rm{g}_{X})$ can be defined as 
$$\Om=dt\wedge P+Q.$$ 
Therefore the instanton equation on $Cyl(X)$ can be defined as \cite{BILL,HN},
\begin{equation}\label{E0}
\ast{F}_{\textbf{A}}+(\ast_{X}P+dt\wedge\ast_{X}Q)\wedge{F}_{\textbf{A}}=0,
\end{equation}
where $\ast_{X}$ is the Hodge star operator of $X$.

Chapter $4$ of \cite{Donaldson} introduces some analytic results about the asympotic behaviour of ASD connection on $4$-manifolds with tubular ends, and aims to give a complete definition of the Floer groups of a homology $3$-sphere. We know from analogous Floer-type theories those are the essential property needed to control solutions over infinite tubes.  All of the known construction methods of higher dimensional instantons automatically yield exponential decay. One can see the curvature will satisfy the $L^{2}$-integrability condition. There is a natural question, whether all $L^{2}$-integrable instantons have exponential decay. We prove that the instanton equation (\ref{1.1}) on the cylinder over a closed \textbf{good} manifold decays exponentially at the ends, See Theorem \ref{T1}.  We also consider the $\Om$-instantons on the cylinder over a closed manifold which admits real Killing spinors, i.e., the forms $P,Q$ satisfying  $dP=4Q$ and $d\ast_{X}Q=(n-3)\ast_{X}P$. In \cite{HT}, the author prove that the non-trivial solutions of instanton equations over the cylinder of the Riemannian manifolds with real Killing Spinors have infinite Yang-Mills energy. In this article, we will prove that if the energy density $\rho(\textbf{A})=0$, See Equation (\ref{E3}), then the instanton is a flat connection, See Theorem \ref{T3}.
\begin{remark}
In dimension 4, the instanton equation on a cylinder is the gradient flow for the Chern-Simons functional on the 3-dimensional cross-section. Critical points of this 3-dimensional functional are precisely flat connections. Similarly, the higher dimensional instanton equation on a cylinder over a  closed $G_{2}$-manifold or Calabi-Yau 3-fold $X$ can be regarded as the gradient flow of a Chern-Simons functional on $X$. However critical points of the higher dimensional Chern-Simons functional are instantons, not necessarily for flat connections. In fact, most examples of  instantons on asymptotically cylindrical manifolds do not have $L^{2}$ curvature because the limit connection is not flat.
\end{remark}
\section{Fundamental preliminaries}
We shall generally adhere to the now standard gauge-theory conventions and notation of Donaldson and Kronheimer \cite{Donaldson/Kronheimer}. Let $M$ be a closed, smooth, Riemannian manifold.  Throughout our article, $\Om^{p}(M,\mathfrak{g}_{P})$ denote the smooth $p$-forms with values in $\mathfrak{g}_{P}$. Given a connection on $P$, we denote by $\na_{A}$ the corresponding covariant derivative on $\Om^{\ast}(M,\mathfrak{g}_{P})$ induced by $A$ and the Levi-Civita connection of $M$. Let $d_{A}$ denote the exterior derivative associated to $\na_{A}$. For $u\in L^{p}_{k,A}(M,\mathfrak{g}_{P})$, where $1\leq p<\infty$ and $k$ is an integer, we denote
\begin{equation}\nonumber
\|u\|_{L^{p}_{k,A}(M)}:=\big{(}\sum_{j=0}^{k}\int_{M}|\na^{j}_{A}u|^{p}dvol_{\rm{g}}\big{)}^{1/p},
\end{equation}
where $\na^{j}_{A}:=\na_{A}\circ\ldots\circ\na_{A}$ (repeated $j$ times for $j\geq0$). For $p=\infty$, we denote
\begin{equation}\nonumber
\|u\|_{L^{\infty}_{k,A}(M)}:=\sum_{j=0}^{k}ess\sup_{M}|\na^{j}_{A}u|.
\end{equation}
\subsection{Chern-Simons Functional}
Let $t$ be the standard parameter on the factor $\mathbb{R}$ in the $Cyl(X):=\mathbb{R}\times X$, where $X$ is a closed, oriented $n$-dimensional Riemannian manifold that admits a smooth Riemannian metric $\rm{g}_{X}$, let $\{x^{j}\}_{j=1}^{n}$ be local coordinates of $X$. A connection $\textbf{A}$ over the cylinder $Cyl(X)$ is given by a local connection matrix $\textbf{A}=A_{0}dt+\sum_{i=1}^{n}A_{i}dx^{i}$, where $A_{0}$ and $A_{i}$ depend on all $n+1$ variable $t,x^{1},\ldots,x^{n}$. We take $A_{0}=0$ (sometimes called a temporal gauge) and denote $\textbf{A}=\sum_{i=1}^{n}A_{i}dx^{i}$. In this situation, the curvature is given by $F_{\textbf{A}}=F_{A}+dt\wedge\dot{A}$, where $\dot{A}=\frac{\pa A}{\pa t}$. We denote  by $\ast_{X}$ the Hodge star operator of $X$. If $\a$ is a $1$-form on $X$, then for Hodge star operator, $\ast$, defined on $Cyl(X)$ with respect to the product metric $dt^{2}+\rm{g}_{X}$, we have $\ast(dt\wedge\a)=\ast_{X}\a$.

We consider a cylinder over a closed manifold $X$ which admits smooth non-zero $3$-form $P$ and $4$-form $Q$. The instanton equation (\ref{E0}) is equivalent to
\begin{equation}\label{2.1}
\begin{split}
&\ast_{X}\dot{A}=-\ast_{X}P\wedge F_{A},\\ &\ast_{X}F_{A}=-\dot{A}\wedge\ast_{X}P-\ast_{X}Q\wedge F_{A}.\\
\end{split}
\end{equation}
Let $P$ be a $G$-bundle over $X$, the connection space $\mathcal{A}$ is an affine space modelled on $\Om^{1}(X,\mathfrak{g}_{P})$, so fixing a reference connection $A_{0}\in\mathcal{A}$, for any $A\in\mathcal{A}$, we can write $A=A_{0}+a$, $a\in\Om^{1}(X,\mathfrak{g}_{P})$. We define the Chern-Simons functional by
\begin{equation*}
CS(A):=-\int_{X}Tr\big{(}a\wedge{d_{A_{0}}a}+\frac{2}{3}a\wedge a\wedge a\big{)}\wedge\ast_{X}P,
\end{equation*}
fixing $CS(A_{0})=0$. This functional is obtained by integrating of the Chern-Simons $1$-form
\begin{equation*}
\Gamma_{A}(\b_{A})=-2\int_{X}Tr(F_{A}\wedge\b_{A})\wedge\ast_{X}P.
\end{equation*}
We find $CS$ explicitly by integrating $\Ga$ over paths $A(t)=A_{0}+ta$, from $A_{0}$ to any $A=A_{0}+a$:
\begin{equation*}
\begin{split}
CS(A)-CS(A_{0})&=\int_{0}^{1}\Gamma_{A(t)}\big{(}\dot{A}(t)\big{)}dt\\
&=-\int_{X}Tr\big{(}d_{A_{0}}a\wedge a+\frac{2}{3}a\wedge a\wedge a\big{)}\wedge\ast_{X}P+C,\\
\end{split}
\end{equation*}
where $C=C(A_{0},a)$ is a constant and vanishes if $A_{0}$ is an instanton. Suppose that the $3$-form $P$ is co-closed, i.e., $d\ast_{X}P=0$. The co-closed condition implies that the Chern-Simons $1$-form is closed. So it does not depend on the path $A(t)$ \cite{Earp2}.
\subsection{Non-degenerate flat connections}
We denote by 
\begin{equation*}
M(P,\rm{g}):=\{\Ga: F_{\Ga}=0\}/\mathcal{G}_{P},
\end{equation*}
the moduli space of gauge-equivalence class $[\Ga]$ of flat connection $\Ga$ on $P$. Following Uhlenbeck strong compactness theorem \cite{Uhlenbeck1982}, we know that the moduli space $M(P,\rm{g})$ is compact. We now begin to define the least eigenvalue of the self-adjoint operator $\De_{A}:=d_{A}d_{A}^{\ast}+d_{A}^{\ast}d_{A}$ with respect to connection $A$ on $L^{2}(X,\Om^{1}(\mathfrak{g}_{P}))$:
\begin{definition}\label{C7}
	The least eigenvalue of $\De_{A}$ on $L^{2}(X,\Om^{1}(\mathfrak{g}_{P}))$ is
	\begin{equation}
	\la(A):=\inf_{v\in\Om^{1}(\mathfrak{g}_{P})\backslash\{0\}}\frac{\langle\De_{A}v,v\rangle_{L^{2}}}{\|v\|^{2}}.
	\end{equation}
\end{definition}
In \cite{HT3} Lemma 3.3, the author showed that the function $\la[\cdot]$  with respect to the Uhlenbeck topology  is a continuous function on the moduli space of flat connections. We recall the definition of \textit{non-degenerate} flat connection, See \cite{Donaldson} Definition 2.4.
\begin{definition}\label{D1}
	A flat connection $\Ga$ on $P$ over a closed Riemannian manifold  $X$ is called \textit{non-degenerate}, if and
	only if $\ker{\De_{\Ga}}|_{\Om^{1}(\mathfrak{g}_{P})}=0$, i.e., $\la(A)>0$.
\end{definition}
Combining the compactness of moduli space of flat connections and the fact that the function $\la[\cdot]$ is continuous under Uhlenbeck topology, we then have 
\begin{proposition}\label{P1}(\cite{HT3} Proposition 3.6)
	If the flat connections over a closed Riemannian manifold $X$ are \textit{non-degenerate}, then there is a positive constant $\la=\la(X,\rm{g})$ such that
	\begin{equation*}
	\la(\Ga)\geq\la, \forall\ \Ga\in M(P,\rm{g}).
	\end{equation*}
\end{proposition}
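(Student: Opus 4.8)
The plan is to obtain the uniform lower bound as a soft consequence of compactness, leaning on the two facts already recorded above: the compactness of $M(P,\rm{g})$ in the Uhlenbeck topology and the continuity of the least-eigenvalue function $\la[\cdot]$ on that space.

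First I would check that $\la$ is genuinely a function on the quotient $M(P,\rm{g})$, i.e. that it is gauge invariant. For $g\in\mathcal{G}_{P}$ the covariant derivative of the gauge-transformed connection satisfies $d_{\Ga^{g}}=\mathrm{Ad}_{g^{-1}}\circ d_{\Ga}\circ\mathrm{Ad}_{g}$, and the same holds for the formal adjoint, so that $\De_{\Ga^{g}}=\mathrm{Ad}_{g^{-1}}\circ\De_{\Ga}\circ\mathrm{Ad}_{g}$. Since $\mathrm{Ad}_{g}$ is a pointwise isometry of $\mathfrak{g}_{P}$, it is an $L^{2}$-isometry, so $\De_{\Ga}$ and $\De_{\Ga^{g}}$ are unitarily equivalent and $\la(\Ga^{g})=\la(\Ga)$. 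Thus $\la$ descends to a well-defined function on $M(P,\rm{g})$.

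Next I would use non-degeneracy pointwise. By Definition \ref{D1}, each flat $\Ga$ has $\ker\De_{\Ga}|_{\Om^{1}(\mathfrak{g}_{P})}=0$; because $\De_{\Ga}$ is a non-negative self-adjoint elliptic operator with discrete spectrum, the vanishing of its kernel is equivalent to its least eigenvalue being strictly positive, i.e. $\la(\Ga)>0$ for every $[\Ga]\in M(P,\rm{g})$. The conclusion is then immediate: a continuous real-valued function on a compact space attains its infimum, so by \cite{HT3} Lemma 3.3 (continuity) and \cite{Uhlenbeck1982} (compactness) there exists $[\Ga_{0}]\in M(P,\rm{g})$ realising
\[
\la:=\inf_{[\Ga]\in M(P,\rm{g})}\la(\Ga)=\la(\Ga_{0})>0 ,
\]
where positivity is exactly the non-degeneracy of $\Ga_{0}$. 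Setting $\la=\la(X,\rm{g})$ gives the claim, with the convention that the bound holds vacuously (for any positive constant) when $M(P,\rm{g})$ is empty.

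I do not expect a serious obstacle here, since the analytic weight of the statement has been offloaded to the two cited results. The only point requiring care is that the topology in which $\la[\cdot]$ is continuous must coincide with the one in which $M(P,\rm{g})$ is compact; both being the Uhlenbeck topology, the attainment of the infimum is legitimate, and the hard analytic work—continuity of the least eigenvalue under Uhlenbeck limits and Uhlenbeck compactness of the flat moduli space—is precisely what is imported rather than reproved.
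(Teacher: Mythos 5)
Your proof is correct and follows exactly the route the paper intends: the paper derives this proposition in one line by combining Uhlenbeck compactness of $M(P,\rm{g})$ with the continuity of $\la[\cdot]$ in the Uhlenbeck topology (citing \cite{HT3}), so that the infimum of a positive continuous function on a compact space is attained and hence positive. Your additional checks (gauge invariance of $\la$ and the equivalence of trivial kernel with strict positivity of the least eigenvalue) are sound and only make explicit what the paper leaves implicit.
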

For $A\in\mathcal{A}_{P}$ and $\delta>0$, we set
\begin{equation*}
T_{A,\delta}=\{a\in\Om^{1}(X,\mathfrak{g}_{P})\mid d^{\ast}_{A}a=0,\|a\|_{L^{\frac{n}{2}}_{1}(X)}\leq\de\}.
\end{equation*}
A neighbourhood of $[A]\in\mathfrak{B}:=\mathcal{A}_{P}/\mathcal{G}_{P}$ can be described as a quotient of $T_{A,\de}$, for small $\de$. In \cite{Taubes1990} Lemma 1.2, Taubes showed that the \textit{non-degenerate} flat connection (under moduli gauge transformation) is isolated on a closed three manifold. We will extend  this  property to higher dimensional  manifold.
\begin{proposition}\label{P3}
	Suppose that $A,\Ga$ are two distinct \textit{non-degenerate} flat connections over a closed Riemannian manifold  $X$. Then there is a positive constant $\de=\de(X,\rm{g})$ such that  
	\begin{equation*}
	\inf_{g\in\mathcal{G}}\|g^{\ast}(A)-\Ga\|_{L^{\frac{n}{2}}_{1}(X)}\geq\de.
	\end{equation*}
\end{proposition}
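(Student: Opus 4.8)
The plan is to argue by contradiction and to reduce the statement to a self-improving elliptic estimate at the critical Sobolev exponent $p=\tfrac{n}{2}$. Suppose no such $\de$ exists; then there is a sequence $g_{k}\in\mathcal{G}$ with $\|g_{k}^{\ast}(A)-\Ga\|_{L^{\frac{n}{2}}_{1}(X)}\to0$. Since $A$ and $\Ga$ are distinct points of the moduli space, $g_{k}^{\ast}(A)$ is never gauge equivalent to $\Ga$, so $b_{k}:=g_{k}^{\ast}(A)-\Ga\neq0$ for every $k$. The goal is to produce, from the flatness of $g_{k}^{\ast}(A)$ together with the non-degeneracy of $\Ga$, a uniform positive lower bound for $\|b_{k}\|_{L^{\frac{n}{2}}_{1}(X)}$, contradicting the convergence to $0$.

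First I would straighten $b_{k}$ into Coulomb gauge relative to $\Ga$. Because $b_{k}\to0$ in $L^{\frac{n}{2}}_{1}(X)$, and this norm embeds critically into $L^{n}(X)$ while $\Ga$ is smooth, the local slice description preceding the statement (Coulomb gauge fixing near the fixed connection $\Ga$, i.e. the slice $T_{\Ga,\de}$) supplies, for $k$ large, gauge transformations $h_{k}$ with $(g_{k}h_{k})^{\ast}(A)=\Ga+a_{k}$, where $d^{\ast}_{\Ga}a_{k}=0$ and $\|a_{k}\|_{L^{\frac{n}{2}}_{1}(X)}\leq C\|b_{k}\|_{L^{\frac{n}{2}}_{1}(X)}\to0$; moreover $a_{k}\neq0$, since otherwise $A$ and $\Ga$ would be gauge equivalent. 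Flatness is gauge invariant, so $F_{\Ga+a_{k}}=d_{\Ga}a_{k}+a_{k}\wedge a_{k}=0$, that is $d_{\Ga}a_{k}=-a_{k}\wedge a_{k}$. Writing $\mathcal{D}_{\Ga}:=(d^{\ast}_{\Ga},d_{\Ga})\colon\Om^{1}(\mathfrak{g}_{P})\to\Om^{0}(\mathfrak{g}_{P})\oplus\Om^{2}(\mathfrak{g}_{P})$, this records $\mathcal{D}_{\Ga}a_{k}=(0,-a_{k}\wedge a_{k})$.

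The operator $\mathcal{D}_{\Ga}$ is overdetermined elliptic, and by Definition \ref{D1} the non-degeneracy of $\Ga$ gives $\ker\mathcal{D}_{\Ga}|_{\Om^{1}(\mathfrak{g}_{P})}=\ker\De_{\Ga}|_{\Om^{1}(\mathfrak{g}_{P})}=0$. A standard compactness argument then removes the zeroth-order term from the G\aa rding inequality and yields $\|a\|_{L^{\frac{n}{2}}_{1}(X)}\leq C\|\mathcal{D}_{\Ga}a\|_{L^{\frac{n}{2}}(X)}$ for all $a\in\Om^{1}(\mathfrak{g}_{P})$, the constant being controlled by $\la(\Ga)$ and the geometry of $X$. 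Combining this with the multiplicative bound $\|a_{k}\wedge a_{k}\|_{L^{\frac{n}{2}}(X)}\leq C\|a_{k}\|^{2}_{L^{n}(X)}\leq C\|a_{k}\|^{2}_{L^{\frac{n}{2}}_{1}(X)}$, which follows from H\"older's inequality and the critical embedding $L^{\frac{n}{2}}_{1}\hookrightarrow L^{n}$, I obtain
\[
\|a_{k}\|_{L^{\frac{n}{2}}_{1}(X)}\leq C\|a_{k}\|^{2}_{L^{\frac{n}{2}}_{1}(X)}.
\]
Since $a_{k}\neq0$, dividing gives $\|a_{k}\|_{L^{\frac{n}{2}}_{1}(X)}\geq 1/C$, contradicting $\|a_{k}\|_{L^{\frac{n}{2}}_{1}(X)}\to0$. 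Reading the slice estimate the other way, the same chain of inequalities shows directly that $\inf_{g}\|g^{\ast}(A)-\Ga\|_{L^{\frac{n}{2}}_{1}(X)}\geq\de$ with $\de=\min\{\ep_{0},1/(C'C)\}$, where $\ep_{0}$ is the radius on which the Coulomb slice around $\Ga$ is valid.

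The main obstacle is the gauge-fixing step at the borderline exponent $p=\tfrac{n}{2}$: the embedding $L^{\frac{n}{2}}_{1}\hookrightarrow L^{n}$ is exactly critical, which is the threshold at which Uhlenbeck's gauge theory operates, so I must verify that the Coulomb slice around $\Ga$ is genuinely available in this norm and that the comparison constant $C$ in $\|a_{k}\|_{L^{\frac{n}{2}}_{1}}\leq C\|b_{k}\|_{L^{\frac{n}{2}}_{1}}$ is uniform. Since I only need to straighten a small perturbation of the fixed smooth connection $\Ga$, and not an arbitrary connection of small curvature, the implicit function theorem suffices for the existence of $h_{k}$; the remaining point is to keep every constant dependent on $(X,\rm{g})$ alone, for which I would invoke Proposition \ref{P1} to bound $\la(\Ga)$ from below and the Uhlenbeck compactness of $M(P,\rm{g})$ to control the geometry of $\Ga$ uniformly.
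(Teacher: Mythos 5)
Your proposal is correct and its skeleton coincides with the paper's: argue by contradiction, put $g^{\ast}(A)-\Ga$ into Coulomb gauge relative to $\Ga$, use flatness to write $d_{\Ga}a=-a\wedge a$, invoke non-degeneracy for a coercive estimate, and absorb the quadratic term via the critical embedding $L^{\frac{n}{2}}_{1}\hookrightarrow L^{n}$ to force $a=0$ when $\|a\|_{L^{\frac{n}{2}}_{1}}$ is small. Where you genuinely diverge is in how the coercivity is produced. The paper never leaves $L^{2}$: it uses the eigenvalue bound $\la\|a\|^{2}_{L^{2}}\leq\|d_{\Ga}a\|^{2}_{L^{2}}$ from Proposition \ref{P1}, the Weitzenb\"ock formula to pass from $\De_{\Ga}$ to $\na_{\Ga}^{\ast}\na_{\Ga}$ at the cost of a Ricci term, and the Kato inequality, ending with a self-improving bound on $\|\na|a|\|^{2}_{L^{2}}$; the payoff is that every constant is visibly controlled by $\la$ (uniform over $M(P,\rm{g})$ by Proposition \ref{P1}) and by the Riemannian data of $X$, so the uniformity of $\de$ in $\Ga$ is immediate. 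You instead invoke the $L^{\frac{n}{2}}$ Calder\'on--Zygmund estimate $\|a\|_{L^{\frac{n}{2}}_{1}}\leq C\|(d^{\ast}_{\Ga},d_{\Ga})a\|_{L^{\frac{n}{2}}}$ for the overdetermined elliptic deformation operator with trivial kernel, which makes the absorption step one line but leaves the constant depending a priori on $\Ga$ through the compactness argument that removes the zeroth-order term; you correctly flag that restoring uniformity requires an extra appeal to Uhlenbeck compactness of the (compact) moduli space of flat connections and continuity of the estimate constant, a step the paper's $L^{2}$ route avoids. Both treatments of the gauge-fixing at the borderline exponent are at the same level of rigor (the paper simply asserts the relative Coulomb gauge exists for small $\de$; you justify it by the slice theorem/implicit function theorem around the fixed smooth $\Ga$), and both conclude by contradicting distinctness of $[A]$ and $[\Ga]$ when $a\equiv0$.
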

\begin{proof}
	We denote  by $A,\Ga$   two distinct  flat connections (under moduli gauge transformation). If there is a sufficiently small constant $\de>0$ such that
	\begin{equation*}
	\inf_{g\in\mathcal{G}}\|g^{\ast}(A)-\Ga\|_{L^{\frac{n}{2}}_{1}(X)}\leq\de,
	\end{equation*}  
	then we can choose a gauge transformation $g\in\mathcal{G}_{P}$ such that $g^{\ast}(A)$ and $\Ga$ satisfy relative gauge fixing, i.e.,
	\begin{equation*}
	d_{\Ga}^{\ast}(g^{\ast}(A)-\Ga)=0.
	\end{equation*}
	For simply, we also denote $g^{\ast}(A)$ to $A$. We denote $a:=A-\Ga$, therefore
	\begin{equation*}
	0=F_{A}=F_{\Ga}+d_{\Ga}a+a\wedge a=d_{\Ga}a+a\wedge a,
	\end{equation*} 
	Following Proposition \ref{P1}, it implies that,
	\begin{equation*}
	\la\|a\|^{2}_{L^{2}(X)}\leq\|d_{\Ga}a\|^{2}_{L^{2}(X)}+\|d^{\ast}_{\Ga}a\|^{2}_{L^{2}(X)}=\|d_{\Ga}a\|^{2}_{L^{2}(X)} .
	\end{equation*}
	Furthermore, the Weitzenb\"{o}ck formula gives
	\begin{equation*}
	(d_{\Ga}^{\ast}d_{\Ga}+d_{\Ga}d_{\Ga}^{\ast})a=\na_{\Ga}^{\ast}\na_{\Ga}a+Ric\circ a.
	\end{equation*}
	Combining the preceding identities yields,
	\begin{equation}\nonumber
	\begin{split}
	\|\na|a|\|^{2}_{L^{2}(X)}&\leq\|\na_{\Ga}a\|^{2}_{L^{2}(X)}\\
	&\leq\|d_{\Ga}a\|^{2}_{L^{2}(X)}+C\|a\|_{L^{2}(X)}^{2}\\
	&\leq (1+C\la^{-1})\|d_{\Ga}a\|_{L^{2}(X)}^{2}\\
	&\leq (1+C\la^{-1})\|a\wedge a\|_{L^{2}(X)}^{2}\\
	&\leq (1+C\la^{-1})\|a\|^{2}_{L^{n}(X)}\|a\|^{2}_{L^{\frac{2n}{n-2}}(X)}\\
	&\leq C(1+\la^{-1})\|a\|^{2}_{L^{\frac{n}{2}}_{1}(X)}\|a\|^{2}_{L^{2}_{1}(X)}\\
	&\leq C(1+\la^{-1})^{2}\|\na|a|\|^{2}_{L^{2}(X)}\|a\|^{2}_{L^{\frac{n}{2}}_{1}(X)},\\
	\end{split}
	\end{equation}
	where we use the Sobolev embedding $L^{\frac{n}{2}}_{1}\hookrightarrow L^{n}$, $L^{2}_{1}\hookrightarrow L^{\frac{2n}{n-2}}$ and Kato inequality $|\na|a||\leq|\na_{\Ga}a|$, $C=C(X,\rm{g})$ is a positive constant. If we choose $\|a\|_{L^{\frac{n}{2}}_{1}(X)}$ sufficiently small to ensure that $\|a\|^{2}_{L^{\frac{n}{2}}_{1}(X)}\leq\frac{\la^{2}}{2C(1+\la)^{2}}$, then
	$a\equiv0$. It contradicts to our initial assumption regarding the connections $A,\Ga$. 
\end{proof}
For any flat connection $[\Ga]$ over a closed $G_{2}$- or Calabi-Yau manifold $X$ with full holonomy, i.e., $\pi_{1}(X)$ is finite, we can show that the least eigenvalue of $\De_{\Ga}$ is positive, i.e., the flat connections are all \textit{non-degenerate}. One also can see \cite{HT2} Theorem  1.1.
\begin{proposition}\label{P2}
	Let $G$ be a compact Lie group, $P$ be a $G$-bundle over a closed, smooth $G_{2}$- or Calabi-Yau manfold $X$. If $X$ has full holonomy, then the flat connections on $X$  are  \textit{non-degenerate}.
\end{proposition}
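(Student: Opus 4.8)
The plan is to identify $\ker\De_{\Ga}|_{\Om^{1}(\mathfrak{g}_{P})}$ with a space of parallel $\mathfrak{g}_{P}$-valued $1$-forms, and then to eliminate it using the full holonomy hypothesis after passing to the universal cover. First I would use that $G_{2}$- and Calabi-Yau manifolds are Ricci-flat, so the Weitzenb\"{o}ck formula already invoked in the proof of Proposition \ref{P3},
\begin{equation*}
\De_{\Ga}a=\na_{\Ga}^{\ast}\na_{\Ga}a+Ric\circ a,
\end{equation*}
degenerates: the Ricci term drops out, and since $\Ga$ is flat there is no bundle-curvature term. Pairing with $a$ and integrating by parts over the closed manifold $X$ then shows that every $a\in\ker\De_{\Ga}|_{\Om^{1}(\mathfrak{g}_{P})}$ satisfies $\na_{\Ga}a=0$, i.e.\ $a$ is parallel for the connection induced on $T^{\ast}X\otimes\mathfrak{g}_{P}$.

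Next I would pass to the universal cover. By the hypothesis of full holonomy $\pi_{1}(X)$ is finite, so the universal covering $p\colon\widetilde{X}\to X$ is a finite Riemannian covering and $\widetilde{X}$ is again a closed, simply connected manifold with the same holonomy group $G_{2}$ or $SU(m)$. Because $p$ is a local isometry, pullback commutes with the relevant covariant derivatives, so $p^{\ast}a$ is a parallel section of $T^{\ast}\widetilde{X}\otimes\mathfrak{g}_{p^{\ast}P}$ for the connection induced by $p^{\ast}\Ga$. Since $p^{\ast}\Ga$ is a flat connection over the simply connected $\widetilde{X}$, it is gauge-equivalent to the product connection, which trivializes $\mathfrak{g}_{p^{\ast}P}$ as a flat bundle; under this trivialization $p^{\ast}a$ becomes a $\mathfrak{g}$-tuple of ordinary Levi-Civita parallel $1$-forms on $\widetilde{X}$.

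Finally I would invoke the full holonomy. The Riemannian holonomy representation of $\widetilde{X}$ on the cotangent space is the standard irreducible representation of $G_{2}\subset SO(7)$ on $\mathbb{R}^{7}$, or of $SU(m)\subset SO(2m)$ on $\mathbb{C}^{m}\cong\mathbb{R}^{2m}$, neither of which fixes a nonzero vector; hence $\widetilde{X}$ carries no nonzero parallel $1$-form (equivalently $b_{1}(\widetilde{X})=0$). Therefore each $\mathfrak{g}$-component of $p^{\ast}a$ vanishes, so $p^{\ast}a=0$, and since $p$ is a surjective local diffeomorphism we conclude $a=0$. This gives $\ker\De_{\Ga}|_{\Om^{1}(\mathfrak{g}_{P})}=0$, i.e.\ $\la(\Ga)>0$, so every flat connection is \textit{non-degenerate}.

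The step I expect to require the most care is the reduction on the universal cover: one must verify that $p^{\ast}\Ga$ genuinely becomes gauge-trivial (so that the twisting by $\mathfrak{g}_{P}$ disappears and the problem reduces to the untwisted vanishing $b_{1}(\widetilde{X})=0$), and that both parallelism and harmonicity are preserved under the finite covering. Everything else is the standard Bochner vanishing on Ricci-flat manifolds together with the representation-theoretic fact that $G_{2}$ and $SU(m)$ act on the tangent space without nonzero fixed vectors.
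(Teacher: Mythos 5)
Your proof is correct, and its first half coincides exactly with the paper's: both use Ricci-flatness and the flatness of $\Ga$ in the Weitzenb\"{o}ck formula to conclude that any $a\in\ker\De_{\Ga}|_{\Om^{1}(\mathfrak{g}_{P})}$ is $\na_{\Ga}$-parallel. Where you diverge is in how the full-holonomy hypothesis is brought to bear. The paper stays on $X$: it computes $0=[\na_{i},\na_{j}]a=\mathrm{ad}(F_{ij}+R_{ij})a$, uses $F_{ij}=0$ to conclude that the components of $a$ lie in the kernel of the Riemannian curvature operator, and invokes an (Ambrose--Singer-style) holonomy-reduction argument. You instead pass to the finite universal cover, use that a flat connection over a simply connected base is gauge-trivial to strip off the $\mathfrak{g}_{P}$-twisting, and reduce to the untwisted statement that a manifold with holonomy exactly $G_{2}$ or $SU(m)$ carries no nonzero parallel $1$-form. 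Your route has the advantage of making fully explicit a point the paper leaves implicit, namely how the twisting by the flat bundle $\mathfrak{g}_{P}$ is removed before the representation-theoretic fact (no nonzero fixed vectors for $G_{2}\subset SO(7)$ or $SU(m)\subset SO(2m)$) is applied; the paper's version is shorter and avoids the covering space, at the cost of compressing the holonomy-reduction step into one sentence. One point worth stating carefully in your write-up: the paper defines ``full holonomy'' as $\pi_{1}(X)$ finite, and your argument needs that this forces $\mathrm{Hol}(\widetilde{X})=\mathrm{Hol}^{0}(X)$ to be all of $G_{2}$ (resp.\ $SU(m)$) rather than a proper subgroup fixing a cotangent vector --- this is the same fact the paper's holonomy-reduction dichotomy relies on, so it is not an extra gap, but it is where the hypothesis actually enters.
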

\begin{proof}
	We denote $\Ga$ by a flat connection on $X$. For any harmonic $1$-form $\a$ with respect to $\De_{\Ga}:=d_{\Ga}^{\ast}d_{\Ga}+d_{\Ga}d_{\Ga}^{\ast}$, following the Weitzenb\"{o}ck formula, we have $\na_{\Ga}\a=0$. Here we use the vanishing of the Ricci curvature on $G_{2}$- or Calabi-Yau manifolds and $\Ga$ is flat.
	
	Let $R_{ij}dx^{i}\wedge dx^{j}$ denote the Riemann curvature tensor viewed as an $ad(T^{\ast}X)$ valued $2$-form,\ The vanishing of $\na_{\Ga}\a$ implies
	\begin{equation*}
	0=[\na_{i},\na_{j}]\a=ad((F_{ij})+R_{ij})\a,
	\end{equation*}
	for all $i,j$. Since $F_{ij}$ vanishes, $R_{ij}\a=0$, and the components of $\a$ are in the kernel of the Riemannian curvature operator. This reduces the Riemannian holonomy group, unless $\a=0$ which implies $\ker{\De_{\Ga}}|_{\Om^{1}(X,\mathfrak{g}_{P})}=0$. Thus, we have the dichotomy: $\a\neq0$ implies a reduction of the holonomy of $X$, and $\a=0$ implies the connection $A$ is \textit{non-degenerate}.
\end{proof}

\subsection{A priori estimate for Yang-Mills equation}
We will recall the monotonicity formula for Yang-Mills equation \cite{Price,Tian}. Let $M$ be a compact Riemannian $n$-manifold with a smooth Riemannian metric $\rm{g}$ and $E$ be a smooth vector bundle over $M$ with compact structure group $G$. Let $p\in M$, let $r_{p}\leq inj(M)$ be a positive number with properties: there are normal coordinates $x_{1},\cdots,x_{n}$ in the geodesic $B_{r_{p}}(p)$ of $(M,\rm{g})$, such that $p=(0,\cdots,0)$ and for some constant $c(p)$:
\begin{equation*}
|\rm{g}_{ij}-\de_{ij}|\leq c(p)r^{2},\ |d\rm{g}_{ij}|\leq c(p)r,
\end{equation*}
where $\rm{g}_{ij}=\rm{g}\langle\frac{\pa}{\pa x_{i}},\frac{\pa}{\pa x_{j}}\rangle$.
\begin{remark}\label{R1}
	The constant $r_{p}$ and $c(p)$ can be chosen depending only on the injective radius at $p$ and the curvature of $\rm{g}$. Suppose that the manfold $M$ is compact, then the constant $r_{p}$ has a lower  positive bounded constant $r_{M}$  and $|c(p)|$ has a upper positive constant $c_{M}$. The constants $r_{M}$ and $c_{M}$  depend only  on $M$ and $\rm{g}$. 
\end{remark}
We will always denote $O(1)$ a quantity bounded by a constant depending only on $n$. For any Yang-Mills connection $A$ of $E$, we have 
\begin{theorem}\label{2.12}
	Let  $A$ be any  Yang-Mills connection of a $G$-bundle  over a compact manifold $M$ with smooth Riemannian metric $\rm{g}$. For any $p\in M$, there are positive constants  $r_{p}, c(p)$ and $a_{0}$ are  positive constant depend on $M,\rm{g}$ such that for any $0<\sigma<\rho<r_{p}$ and $a\geq a_{0}$, we have
	\begin{equation}\nonumber
	\begin{split}
	&\quad\rho^{4-n}e^{a\rho^{2}}\int_{B_{\rho}(p)}|F_{A}|^{2}dvol_{\rm{g}}-\sigma^{4-n}e^{a\sigma^{2}}\int_{B_{\sigma}(p)}|F_{A}|^{2}dvol_{\rm{g}}\\
	&\geq4\int_{B_{\rho}(p)\backslash B_{\sigma}(p)}r^{4-n}e^{ar^{2}}|\frac{\pa}{\pa r}\lrcorner F_{A}|^{2}dvol_{\rm{g}}+\int_{\sigma}^{\rho}\big{(}e^{a\tau^{2}}\tau^{4-n}(2a\tau-O(1)c(p)\tau)\int_{B_{\tau}(p)}|F_{A}|^{2}dvol_{\rm{g}}\big{)}d\tau,\\
	\end{split}
	\end{equation}
\end{theorem}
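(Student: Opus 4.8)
The plan is to obtain the \emph{differential} version of the stated inequality from the divergence-free stress-energy tensor of the Yang--Mills field, and then integrate it against the weight $e^{a\tau^{2}}$, which is engineered precisely to dominate the error generated by the non-flatness of $\rm{g}$ near $p$. Throughout I work with a smooth Yang--Mills connection (smoothness in a good local gauge being guaranteed by elliptic regularity), so that the divergence theorem applies on geodesic balls.

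First I would introduce the stress-energy tensor, defined on vector fields by
$$T(Y,Z)=\langle\iota_{Y}F_{A},\iota_{Z}F_{A}\rangle-\tfrac14\langle Y,Z\rangle|F_{A}|^{2},$$
with $|F_{A}|^{2}=\langle F_{A},F_{A}\rangle$ the full contraction, and recall that for a Yang--Mills connection $T$ is divergence free, $\na^{i}T_{ij}=0$; this is the standard pointwise consequence of the second Bianchi identity $d_{A}F_{A}=0$ together with the equation $d_{A}^{\ast}F_{A}=0$. Working in the geodesic normal coordinates $x_{1},\ldots,x_{n}$ at $p$, I set $X=x^{k}\pa_{k}=r\pa_{r}=\na(\tfrac12 r^{2})$, the radial position field. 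Since $T$ is symmetric and divergence free, $\na_{i}(T^{i}{}_{j}X^{j})=T_{ij}\na^{i}X^{j}$, so the divergence theorem on $B_{\tau}(p)$ gives
$$\int_{\pa B_{\tau}(p)}T(X,\nu)\,dS=\int_{B_{\tau}(p)}T_{ij}\na^{i}X^{j}\,dvol_{\rm{g}}.$$

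Next I would evaluate the two sides. On $\pa B_{\tau}(p)$ the Gauss lemma gives $\nu=\pa_{r}$ and $X=\tau\pa_{r}$, so $T(X,\nu)=\tau\big(|\tfrac{\pa}{\pa r}\lrcorner F_{A}|^{2}-\tfrac14|F_{A}|^{2}\big)$, which produces the radial boundary term and, via $E'(\tau)=\int_{\pa B_{\tau}(p)}|F_{A}|^{2}\,dS$ (again Gauss lemma), a multiple of $E'(\tau)$, where $E(\tau):=\int_{B_{\tau}(p)}|F_{A}|^{2}\,dvol_{\rm{g}}$. For the interior, $\na^{i}X^{j}$ is the Hessian of $\tfrac12 r^{2}$, and the normal-coordinate bounds $|\rm{g}_{ij}-\de_{ij}|\le c(p)r^{2}$, $|d\rm{g}_{ij}|\le c(p)r$ give $\na^{i}X^{j}=\rm{g}^{ij}+O(1)c(p)r^{2}$, whence $T_{ij}\na^{i}X^{j}=\operatorname{tr}T+O(1)c(p)r^{2}|F_{A}|^{2}=\tfrac{4-n}{4}|F_{A}|^{2}+O(1)c(p)r^{2}|F_{A}|^{2}$. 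Equating the two evaluations and rearranging, together with $\int_{B_{\tau}(p)}r^{2}|F_{A}|^{2}\le\tau^{2}E(\tau)$, yields
$$\frac{d}{d\tau}\big(\tau^{4-n}E(\tau)\big)\ge 4\tau^{4-n}\int_{\pa B_{\tau}(p)}\big|\tfrac{\pa}{\pa r}\lrcorner F_{A}\big|^{2}\,dS-O(1)c(p)\tau^{5-n}E(\tau).$$

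Finally I would insert the weight: differentiating $e^{a\tau^{2}}\tau^{4-n}E(\tau)$ contributes the extra term $2a\tau\cdot e^{a\tau^{2}}\tau^{4-n}E(\tau)$, so the previous inequality upgrades to
$$\frac{d}{d\tau}\big(e^{a\tau^{2}}\tau^{4-n}E(\tau)\big)\ge 4e^{a\tau^{2}}\tau^{4-n}\int_{\pa B_{\tau}(p)}\big|\tfrac{\pa}{\pa r}\lrcorner F_{A}\big|^{2}\,dS+e^{a\tau^{2}}\tau^{4-n}\big(2a\tau-O(1)c(p)\tau\big)E(\tau).$$
Choosing the threshold $a_{0}=a_{0}(c_{M})$ so that $2a\tau\ge O(1)c(p)\tau$ uniformly in $p$ (using the bound $c(p)\le c_{M}$ from Remark \ref{R1}) keeps the last coefficient nonnegative; integrating in $\tau$ from $\sigma$ to $\rho$ and converting the boundary integral into a solid integral over $B_{\rho}(p)\backslash B_{\sigma}(p)$ by the coarea formula then gives exactly the asserted inequality. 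I expect the main obstacle to be the uniform bookkeeping of the curvature errors: verifying $\na^{i}X^{j}=\rm{g}^{ij}+O(1)c(p)r^{2}$ from the Hessian of $\tfrac12 r^{2}$ and the metric estimates, controlling every error integrand by $c_{M}$ independently of $p$, and fixing $a_{0}$ so that the weight absorbs these errors simultaneously for all $p\in M$.
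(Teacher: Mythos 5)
Your proposal is correct and follows essentially the same route as the paper: the differential inequality you derive from the divergence-free stress-energy tensor paired with the radial field $r\partial_{r}$ is exactly the content of Tian's Equation (2.1.8) (with $\phi=1$), which the paper simply cites before integrating in $\tau$ against the weight $e^{a\tau^{2}}$. The only difference is that you carry out the Price-type computation from scratch (divergence theorem on geodesic balls, Hessian of $\tfrac{1}{2}r^{2}$ in normal coordinates) where the paper invokes a cutoff $\xi_{\tau}$ and the reference to Tian; your error term $O(1)c(p)\tau$, the boundary-to-solid conversion by the coarea formula, and the choice of $a_{0}$ absorbing the metric errors uniformly in $p$ all match the stated formula.
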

\begin{proof}
	We choose, for any $\tau$ small enough, $\xi(r)=\xi_{\tau}(r)=\eta(\frac{r}{\tau})$, where $\eta$ is smooth and satisfies: $\eta(r)=1$ for $r\in[0,1]$, $\eta(r)=0$ for $r\in[1+\varepsilon,\infty)$, $\varepsilon>0$ and $\eta'(r)\leq 0$. By taking $\phi=1$ on \cite{Tian} Equation (2.1.8), we have
	\begin{equation}\nonumber
	\begin{split}
	&\quad\frac{\pa}{\pa \tau}(\tau^{4-n}e^{a\tau^{2}}\int_{M}\xi_{\tau}|F_{A}|^{2}dV_{g})\\
	&=4\tau^{4-n}e^{a\tau^{2}}\big{(}\frac{\pa}{\pa\tau}(\int_{M}\xi_{\tau}|\frac{\pa}{\pa r}\lrcorner F_{A}|^{2}dV_{g})+(-O(1)c(p)+2a)\tau\int_{M}\xi_{\tau}|F_{A}|^{2}dV_{g}\big{)}.\\
	\end{split}
	\end{equation}
	Then, by integrating on $\tau$ and letting $\varepsilon$ tends to zero, we complete the proof of this theorem.
\end{proof}
We then recall the $\varepsilon$-regular theorem of Yang-Mills equation \cite{Tian,Uhlenbeck1982}.
\begin{theorem}\label{2.19}(\cite{Tian} Theorem 2.2.1)
	Let $A$ be any Yang-Mills connection of a $G$-bundle over a compact manifold $M$ with smooth Riemannian metric $\rm{g}$. Then there exist positive constants $\varepsilon=\varepsilon(M,\rm{g},n)$ and $C=C(M,\rm{g},n)$ which depend on $M,\rm{g},n$, such that for any $p\in M$ and $0<\rho<r_{p}$, whenever 
	\begin{equation*}
	\rho^{4-n}\int_{B_{\rho}(p)}|F_{A}|^{2}dvol_{\rm{g}}\leq\varepsilon,
	\end{equation*}
	then
	\begin{equation*}
	|F_{A}|(p)\leq\frac{C}{\rho^{2}}\big{(}\rho^{4-n}\int_{B_{\rho}(p)}|F_{A}|^{2}dvol_{\rm{g}}\big{)}^{\frac{1}{2}}.
	\end{equation*}
\end{theorem}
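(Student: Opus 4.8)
The plan is to establish this as the standard $\varepsilon$-regularity estimate for Yang-Mills connections, combining a Bochner-type differential inequality for the curvature with the monotonicity formula of Theorem \ref{2.12} and a mean-value argument. First I would derive the Weitzenb\"{o}ck formula for a Yang-Mills connection: using the Yang-Mills equation $d_{A}^{\ast}F_{A}=0$ together with the Bianchi identity $d_{A}F_{A}=0$, one obtains
\begin{equation}\nonumber
\na_{A}^{\ast}\na_{A}F_{A}=\{F_{A},F_{A}\}+Rm\ast F_{A},
\end{equation}
where $Rm$ denotes the Riemann curvature of $(M,\rm{g})$ and $\{\cdot,\cdot\}$ is a pointwise bilinear algebraic operation. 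Pairing with $F_{A}$ and using the identity $\De|F_{A}|^{2}=2|\na_{A}F_{A}|^{2}-2\langle\na_{A}^{\ast}\na_{A}F_{A},F_{A}\rangle$ (scalar Laplacian) shows that $e:=|F_{A}|^{2}$ satisfies, in the weak sense, a differential inequality of the form
\begin{equation}\nonumber
\De e\geq-c_{1}|F_{A}|\,e-c_{2}\,e,
\end{equation}
with $c_{1}=c_{1}(n)$ and $c_{2}$ controlled by $\|Rm\|_{\infty}$; in particular $e$ is a subsolution of a linear elliptic operator whose zeroth-order coefficient is $O(|F_{A}|+1)$.

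The difficulty is precisely that this coefficient is not a priori bounded; the cubic term $|F_{A}|\cdot e=|F_{A}|^{3}$ is the genuine nonlinearity, and absorbing it is the entire content of the theorem. To handle it I would run a \emph{point-selection} (blow-up) argument. Set $G(\sigma)=(\rho/2-\sigma)^{4}\sup_{\ov{B}_{\sigma}(p)}e$ on $[0,\rho/2)$; since $G$ is continuous and vanishes at the endpoint, it attains its maximum at some $\sigma_{0}$, realised at a point $x_{0}$ with $e_{0}:=e(x_{0})$. Writing $r_{0}=\tfrac{1}{2}(\rho/2-\sigma_{0})$, the maximality of $G$ forces $\sup_{B_{r_{0}}(x_{0})}e\leq 16\,e_{0}$. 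I would then rescale the metric and connection by the factor $r_{0}$, normalising the rescaled curvature density $\tilde{e}$ so that $\tilde{e}(0)=r_{0}^{4}e_{0}$, the background metric converges to the Euclidean one, and the Bochner inequality above acquires \emph{uniformly bounded} coefficients on the unit ball.

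The crucial input from the monotonicity formula of Theorem \ref{2.12} is that the small-energy hypothesis propagates to all nearby centres and smaller scales: for every $x\in B_{\rho/2}(p)$ and $0<s\leq\rho/2$ one gets $s^{4-n}\int_{B_{s}(x)}|F_{A}|^{2}dvol_{\rm{g}}\leq C\varepsilon$, because $B_{s}(x)\subseteq B_{\rho}(p)$ and the (almost) monotone scale-invariant energy centred at $x$ is bounded by its value at scale $\rho/2$. Applying this at $x_{0}$ with $s=r_{0}$ makes the rescaled scale-invariant energy $\int_{B_{1}}\tilde{e}=r_{0}^{4-n}\int_{B_{r_{0}}(x_{0})}|F_{A}|^{2}dvol_{\rm{g}}$ small enough for the mean-value inequality for subsolutions (Moser iteration on the nearly flat background, using the Sobolev inequality) to close with a constant independent of $A$, yielding $\tilde{e}(0)\leq C\int_{B_{1}}\tilde{e}$. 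Undoing the scaling and invoking monotonicity once more to replace $r_{0}^{4-n}\int_{B_{r_{0}}(x_{0})}|F_{A}|^{2}$ by $C\rho^{4-n}\int_{B_{\rho}(p)}|F_{A}|^{2}$ gives $G(\sigma_{0})\leq C\rho^{4-n}\int_{B_{\rho}(p)}|F_{A}|^{2}dvol_{\rm{g}}$. Since $G(0)=(\rho/2)^{4}e(p)\leq G(\sigma_{0})$, this reads $e(p)\leq C\rho^{-n}\int_{B_{\rho}(p)}|F_{A}|^{2}dvol_{\rm{g}}$, and taking square roots yields the claimed bound $|F_{A}|(p)\leq C\rho^{-2}\big{(}\rho^{4-n}\int_{B_{\rho}(p)}|F_{A}|^{2}dvol_{\rm{g}}\big{)}^{1/2}$. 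The main obstacle, and the sole place where the smallness $\rho^{4-n}\int_{B_{\rho}(p)}|F_{A}|^{2}\leq\varepsilon$ is indispensable, is ensuring the rescaled curvature stays small enough that the Moser iteration closes with a uniform constant; this is exactly what the monotonicity formula guarantees by ruling out energy concentration under the rescaling.
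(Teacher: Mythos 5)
The paper offers no proof of this statement: it is quoted directly from the cited reference (Tian, Theorem 2.2.1), so there is no internal argument to compare against. Your proposal is essentially the standard $\varepsilon$-regularity proof found in that reference and in Uhlenbeck's work --- Bochner--Weitzenb\"{o}ck inequality for the curvature density, Schoen-type point selection, rescaling, the monotonicity formula of Theorem \ref{2.12} to propagate the small-energy hypothesis to the selected centre and scale, and Moser iteration --- so the architecture is the right one. The one step you must tighten is the rescaling: after dilating by $r_{0}$ you assert that the Bochner inequality has \emph{uniformly bounded} coefficients on the unit ball, but the bound you actually possess is $\sup_{B_{1}}\tilde{e}\leq 16\,\Lambda$ with $\Lambda:=r_{0}^{4}e_{0}$, and $\Lambda$ is precisely the quantity not yet controlled; for $n>4$ the relevant $L^{n/2}$-norm of the potential $c_{1}|\tilde{F}|+c_{2}r_{0}^{2}$ is only bounded by a power of $\Lambda$ times the small energy, so the Moser constant is not a priori uniform and the argument as written is circular at this point. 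The standard repair is to first run the point selection as a contradiction argument: rescale by $e_{0}^{-1/4}$ so that the rescaled density equals $1$ at the origin and is at most $16$ on a ball of radius $\Lambda^{1/4}$, apply the mean-value inequality (now with genuinely bounded coefficients) together with monotonicity to force $1\leq C\varepsilon$, and conclude $\Lambda\leq C$ once $\varepsilon$ is small. With that bound in hand your final Moser step and the two applications of monotonicity go through exactly as you describe and yield $|F_{A}|(p)\leq C\rho^{-2}\bigl(\rho^{4-n}\int_{B_{\rho}(p)}|F_{A}|^{2}dvol_{\rm{g}}\bigr)^{1/2}$; so the proposal is correct up to this routine but non-optional intermediate step.
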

The constant $r_{p}$ in Theorem \ref{2.12} and Theorem \ref{2.19} is the same.  We then have a useful $L^{\infty}$ estimate for Yang-Mills connection $A$ when the $L^{2}$-norm of curvature $F_{A}$ is sufficiently small. 
\begin{corollary}\label{C1}
	Let $A$ be any Yang-Mills connection of a principal $G$-bundle over a compact manifold $M$ with smooth Riemannian metric $\rm{g}$. Then there are  positive constants $\varepsilon_{0}=\varepsilon_{0}(M,\rm{g},n)$ and $C=C(M,\rm{g},n)$ with following significance. If the curvature $F_{A}$ of connection $A$ obeys
	\begin{equation*}
	\|F_{A}\|_{L^{2}(M)}\leq\varepsilon_{0},
	\end{equation*} 
 then
	\begin{equation}\label{E2}
	\|F_{A}\|_{L^{\infty}(M)}\leq C\|F_{A}\|_{L^{2}(M)}.
	\end{equation} 
\end{corollary}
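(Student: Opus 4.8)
The plan is to obtain the global $L^{\infty}$ bound by applying the $\varepsilon$-regularity theorem (Theorem \ref{2.19}) at every point $p\in M$ with a single radius chosen uniformly over $M$. The device that makes a uniform choice possible is Remark \ref{R1}: since $M$ is compact, the admissible radii $r_{p}$ appearing in Theorem \ref{2.19} are bounded below by a positive constant $r_{M}=r_{M}(M,\rm{g})$. I would therefore fix once and for all a radius $\rho_{0}\in(0,r_{M})$ depending only on $M$ and $\rm{g}$; then $0<\rho_{0}<r_{p}$ for every $p\in M$, so Theorem \ref{2.19} is available at each point with this same $\rho_{0}$.

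First I would estimate the local energy by the global energy. For any $p\in M$ we have $B_{\rho_{0}}(p)\subseteq M$, hence
\begin{equation}\nonumber
\rho_{0}^{4-n}\int_{B_{\rho_{0}}(p)}|F_{A}|^{2}dvol_{\rm{g}}\leq\rho_{0}^{4-n}\int_{M}|F_{A}|^{2}dvol_{\rm{g}}=\rho_{0}^{4-n}\|F_{A}\|^{2}_{L^{2}(M)}.
\end{equation}
Let $\varepsilon=\varepsilon(M,\rm{g},n)$ be the constant furnished by Theorem \ref{2.19}, and set $\varepsilon_{0}:=\rho_{0}^{(n-4)/2}\varepsilon^{1/2}$, which depends only on $M,\rm{g},n$. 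Then the hypothesis $\|F_{A}\|_{L^{2}(M)}\leq\varepsilon_{0}$ forces $\rho_{0}^{4-n}\int_{B_{\rho_{0}}(p)}|F_{A}|^{2}dvol_{\rm{g}}\leq\varepsilon$ at every $p$, so the smallness hypothesis of the $\varepsilon$-regularity theorem holds simultaneously at all points of $M$.

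Applying Theorem \ref{2.19} at each $p$ and inserting the bound above then yields
\begin{equation}\nonumber
|F_{A}|(p)\leq\frac{C}{\rho_{0}^{2}}\Big{(}\rho_{0}^{4-n}\int_{B_{\rho_{0}}(p)}|F_{A}|^{2}dvol_{\rm{g}}\Big{)}^{1/2}\leq C\rho_{0}^{-2}\rho_{0}^{(4-n)/2}\|F_{A}\|_{L^{2}(M)}.
\end{equation}
Since $\rho_{0}$ is a fixed constant depending only on $M,\rm{g},n$, the prefactor $C\rho_{0}^{(4-n)/2-2}$ is a constant of the same type; renaming it $C$ and taking the supremum over $p\in M$ gives $\|F_{A}\|_{L^{\infty}(M)}\leq C\|F_{A}\|_{L^{2}(M)}$, which is precisely (\ref{E2}).

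There is no serious analytic obstacle at this stage, since the substantive work has already been encapsulated in the monotonicity formula (Theorem \ref{2.12}) and the $\varepsilon$-regularity estimate (Theorem \ref{2.19}). The only point requiring care is the passage from the pointwise, scale-dependent local statement to a genuinely global one, and this is exactly where the compactness of $M$ enters through the uniform lower bound $r_{M}$ of Remark \ref{R1}, which permits a single choice of $\rho_{0}$ valid at every point and hence a uniform smallness threshold $\varepsilon_{0}$.
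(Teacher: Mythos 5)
Your proof is correct as written, and it reaches the conclusion by a slightly more economical route than the paper. The paper first invokes the monotonicity formula (Theorem \ref{2.12}), choosing $a$ so large that $2a-O(1)c_{M}>0$, to derive the intermediate estimate $\sigma^{4-n}\int_{B_{\sigma}(p)}|F_{A}|^{2}\leq C\int_{M}|F_{A}|^{2}$ uniformly for \emph{all} $\sigma\in(0,r_{M})$, and only then feeds this into Theorem \ref{2.19}, finally letting $\sigma\nearrow r_{M}$. You instead work at a single fixed scale $\rho_{0}$ comparable to $r_{M}$ and bound the local energy by the trivial containment $B_{\rho_{0}}(p)\subseteq M$, so that $\rho_{0}^{4-n}$ is just a harmless constant; this bypasses Theorem \ref{2.12} entirely. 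Since the $\varepsilon$-regularity theorem, as stated, requires smallness only at the one scale $\rho$ at which it is applied, and since both arguments ultimately evaluate everything at a scale near $r_{M}$, your shortcut is legitimate. What the monotonicity step buys the paper is control of the renormalized energy at arbitrarily small scales $\sigma$, where $\sigma^{4-n}$ blows up for $n>4$ and the trivial bound is useless; that extra strength is simply not needed for this corollary. It is worth keeping in mind that the monotonicity formula is not truly avoided --- it is used inside Tian's proof of Theorem \ref{2.19} itself --- so your observation that the substantive analysis is already encapsulated in the two quoted theorems is accurate.
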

\begin{proof}
Since the Theorem \ref{2.12} holds for all  sufficiently large constant $a$,  we can choose the constant $a$ to ensure that
	\begin{equation*}
	2a-O(1)c_{M}>0.
	\end{equation*}
where $O(1)$ is the constant appear in Theorem \ref{2.12}, $c_{M}$ is a upper positive constant of $|c(p)|$. Hence for any $0<\sigma<\rho< r_{M}$, 
	$$\sigma^{4-n}\int_{B_{\sigma}(p)}|F_{A}|^{2}dvol_{\rm{g}}\leq \rho^{4-n}e^{a(\rho^{2}-\sigma^{2})}\int_{B_{\rho}(p)}|F_{A}|^{2}dvol_{\rm{g}}\leq \rho^{4-n}e^{a\rho^{2}}\int_{M}|F_{A}|^{2}dvol_{\rm{g}},$$
	We now let $\rho\nearrow r_{M}$, hence 
	\begin{equation}\label{E1}
	\sigma^{4-n}\int_{B_{\sigma}(p)}|F_{A}|^{2}dvol_{\rm{g}}\leq C\int_{M}|F_{A}|^{2}dvol_{\rm{g}},\ \forall \sigma\in(0,r_{M}),
	\end{equation}
	where $C$ is a positive constant depending on $M$ and metric $\rm{g}$. 
	
	For any $p\in M$, in the geodesic ball $B_{\sigma}(p)$, $0<\sigma<r_{M}$, following Equation (\ref{E1}), we have
	\begin{equation*}
	\sigma^{4-n}\int_{B_{\sigma}(p)}|F_{A}|^{2}dvol_{\rm{g}}\leq C\varepsilon_{0},\ \forall \sigma\in(0,r_{M}),
	\end{equation*}
	where $C=C(M,\rm{g})$ is a positive constant. We choose  $\varepsilon_{0}$ small enough to ensure that $C\varepsilon_{0}\leq\varepsilon$, where $\varepsilon$ is the constant in Theorem \ref{2.19}. Hence following Theorem \ref{2.19},
	\begin{equation*}
	|F_{A}|(p)\leq \frac{C}{\sigma^{2}}(\sigma^{4-n}\int_{B_{\sigma}(p)}|F_{A}|^{2}dvol_{\rm{g}})^{\frac{1}{2}}\leq \sigma^{-\frac{n}{2}}C\|F_{A}\|_{L^{2}(X)},\ \forall\sigma\in(0,r_{M}).
	\end{equation*}
	We now let $\sigma\nearrow r_{M}$, thus 
	\begin{equation*}
	|F_{A}|(p)\leq r_{M}^{-\frac{n}{2}}C\|F_{A}\|_{L^{2}(X)},\  \forall p\in M.
	\end{equation*}
	We complete this proof.
\end{proof}
\section{Asymptotic Behavior}

\begin{definition}\label{D2}
	Let $X$ be a closed, smooth manifold of dimension $n\geq 4$ with a Riemannian metric $\rm{g}_{X}$. We call $X$  a \textbf{good} manifold, if $X$ admits non-zero, smooth $3$-form $P$ and $4$-form $Q$ satisfying $d\ast_{X}P=d\ast_{X}Q=0$. 
\end{definition}
\begin{example}
	There are many manifolds are \textbf{good} in the sense of Definition \ref{D2}. For example:\\
	(1) $X$ is a Calabi-Yau 3-fold. It is defined as a manifold with a K\"{a}hler $(1,1)$-form $\w$ and a holomorphic form $\Om\in\Om^{3,0}$. We can construct a $G_{2}$-structure on $Cyl(X)$:
	\begin{equation*}
	\phi=dt\wedge\w+Im\Om.
	\end{equation*}
	We denote $(P,Q):=(Re\Om,\frac{1}{2}\w^{2})$, one can see the instanton equation (\ref{E0}) is a $G_{2}$-instanton.\\
	(2) $X$ is a parallel $G_{2}$-manifold. It is defined as a manifold with a $G_{2}$-structure $3$-from $\phi$. We can construct a $Spin(7)$-structure on $Cyl(X)$:
	\begin{equation*}
	\Phi=dt\wedge\phi+\ast_{X}\phi.
	\end{equation*}
	We denote  $(P,Q):=(\phi,\ast\phi)$, one also can see the instanton equation (\ref{E0}) is a $Spin(7)$-instanton.
\end{example}
Taking the exterior derivative of (\ref{E0}), then  using the Bianchi identity and the fact $(P,Q)$ is co-closed, it's easy to see the solution of instanton equation (\ref{E0}) also satisfies Yang-Mills equation. In this section, we denote by $X$  a closed \textbf{good} manifold. We begin to study the decay of instantons over tubular ends. At first, we consider a family of bands $B_{T}=[T,T+1]\times X$ which we identify with the model $B_{0}=[0,1]\times X$ by translation. So the integrability of $|F_{\mathbf{A}}|^{2}$ over the end implies that $\int_{[T,T+1]\times X}|F_{\mathbf{A}}|^{2}\rightarrow0$ as $T\rightarrow\infty$. On the compact manifold $B_{T}$, we can choose a positive constant $r_{T}$ by the similar way in above Section 2.3, See Remark \ref{R1}. Since  $B_{T}$  is identified with $B_{0}$ by translation, it's easy to see $r_{T}$ is not dependent on $T$.  We then have
\begin{proposition}\label{3.4}
	Suppose that $\mathbf{A}$ is an instanton on a cylinder $Cyl(X)$ over a closed \textbf{good} manifold  with $L^{2}$-curvature $F_{\mathbf{A}}$. Suppose also that the flat connections on $X$ are non-degenerate. Then at the end of $Cyl(X)$, there is a flat connection $\Gamma$ over $X$ such that $\mathbf{A}$ converges to $\Gamma$, i.e. the restriction $\mathbf{A}|_{X\times\{T\}}$ converges (modulo gauge equivalence) to $\Ga$ in $C^{\infty}$ over $X$ as $T\rightarrow\infty$.
\end{proposition}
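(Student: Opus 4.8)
The plan is to combine the a priori Yang--Mills estimates of Section 2.3 with Uhlenbeck compactness and the isolation of non-degenerate flat connections. Since a solution of (\ref{E0}) over a \textbf{good} manifold is Yang--Mills and $F_{\mathbf{A}}\in L^{2}(Cyl(X))$, the band energies $\int_{B_{T}}|F_{\mathbf{A}}|^{2}dvol_{\rm{g}}$ tend to $0$ as $T\to\infty$. Working on the translated bands $B_{T}\cong B_{0}$, whose geometric constants $r_{T},c(p)$ are independent of $T$, I would apply the monotonicity formula (Theorem \ref{2.12}) together with the $\varepsilon$-regularity theorem (Theorem \ref{2.19}) at points of the central slice $X\times\{T\}$, using balls sitting inside the slightly enlarged band $[T-1,T+2]\times X$. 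Once $T$ is large enough that the local energy drops below the threshold $\varepsilon$, this yields the pointwise bound $\|F_{\mathbf{A}}\|_{L^{\infty}(X\times\{T\})}\to 0$. In the temporal gauge $F_{\mathbf{A}}=F_{A}+dt\wedge\dot{A}$, this means both $|F_{A}|$ and $|\dot{A}|$ decay uniformly along the slices.

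Next I would study the slice connections $A_{T}:=\mathbf{A}|_{X\times\{T\}}$ on $X$, whose curvature is $F_{A}|_{X\times\{T\}}$. Because $X$ is compact and $\|F_{A_{T}}\|_{L^{\infty}(X)}\to0$, in particular $\|F_{A_{T}}\|_{L^{\frac{n}{2}}(X)}\to0$, so for any sequence $T_{k}\to\infty$ Uhlenbeck's gauge-fixing and strong compactness theorem \cite{Uhlenbeck1982} produces gauge transformations $g_{k}$ and a subsequence along which $g_{k}^{\ast}A_{T_{k}}$ converges; since the curvatures converge to zero, the limit is a flat connection. Hence every subsequential limit of $[A_{T}]$ in $\mathfrak{B}$ lies in the moduli space $M(P,\rm{g})$. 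By hypothesis the flat connections are non-degenerate, so Proposition \ref{P1} gives a uniform spectral gap and Proposition \ref{P3} shows that distinct classes in $M(P,\rm{g})$ are separated by a fixed distance $\delta$ in $L^{\frac{n}{2}}_{1}$; combined with the compactness of $M(P,\rm{g})$ this forces $M(P,\rm{g})$ to be a finite set $\{[\Gamma_{1}],\dots,[\Gamma_{N}]\}$.

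To upgrade subsequential convergence to a genuine limit, I would first note that $\mathrm{dist}_{L^{\frac{n}{2}}_{1}}([A_{T}],M(P,\rm{g}))\to0$: otherwise some sequence $[A_{T_{k}}]$ would stay a fixed distance from $M(P,\rm{g})$ while having curvature tending to zero, contradicting the previous paragraph. Since $t\mapsto A_{t}$ is smooth, the induced path $t\mapsto[A_{t}]\in\mathfrak{B}$ is continuous in the $L^{\frac{n}{2}}_{1}$ quotient metric. For $T$ large the class $[A_{T}]$ lies within $\delta/2$ of $M(P,\rm{g})$, hence within $\delta/2$ of exactly one $[\Gamma_{j}]$, because the $\delta/2$-balls around the finitely many points of $M(P,\rm{g})$ are pairwise disjoint by Proposition \ref{P3}. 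A continuous path cannot jump between disjoint neighbourhoods, so it is trapped near a single $\Gamma:=\Gamma_{j}$, giving $[A_{T}]\to[\Gamma]$ in $L^{\frac{n}{2}}_{1}$.

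Finally I would promote this to $C^{\infty}$ convergence. Fixing Coulomb gauge relative to $\Gamma$, write $a_{T}=A_{T}-\Gamma$ with $d^{\ast}_{\Gamma}a_{T}=0$; then $\|a_{T}\|_{L^{\frac{n}{2}}_{1}(X)}\to0$, and the uniform $L^{\infty}$ curvature bounds on the bands give, via elliptic regularity for the Yang--Mills equation in Coulomb gauge, uniform $C^{\infty}$ bounds on $a_{T}$ over $X$. Interpolating between the strong convergence in $L^{\frac{n}{2}}_{1}$ and these uniform higher-order bounds yields $\|a_{T}\|_{C^{\infty}(X)}\to0$. The step I expect to be the main obstacle is pinning down the \emph{unique} limit in the previous paragraph: the pointwise curvature decay only produces subsequential flat limits, and it is precisely the non-degeneracy of the flat connections---through the uniform separation of Proposition \ref{P3}---that rules out the connection drifting among different points of $M(P,\rm{g})$ and forces convergence of the whole family $\mathbf{A}|_{X\times\{T\}}$.
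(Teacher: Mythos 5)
Your proposal is correct and follows essentially the same route as the paper: band energy decay plus the monotonicity/$\varepsilon$-regularity estimates (Corollary \ref{C1}) give $\|F_{A(T)}\|_{L^{\infty}(X)}\to 0$, Uhlenbeck compactness yields subsequential flat limits, and the isolation of non-degenerate flat connections (Proposition \ref{P3}) together with continuity of the path $[A(T)]$ pins down a unique limit, upgraded to $C^{\infty}$ by the every-subsequence-has-a-convergent-subsequence argument. The only differences are cosmetic (you apply compactness to the slices rather than the bands and spell out the finiteness of $M(P,\rm{g})$).
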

\begin{proof}
The argument is similar to Proposition 4.1 on \cite{Donaldson}. Let $A(T)$ be the connection over $B_{T}$, so the integrability of $|F_{\textbf{A}}|^{2}$ and Corollary  \ref{C1} imply that  $\|F_{A(T)}\|_{L^{\infty}(X)}\rightarrow0$ as $T\rightarrow\infty$.	
Uhlenbeck's weak compactness theorem implies that for any sequence $T_{i}\rightarrow\infty$ there are subsequence $T'_{i}$ and a flat connection $\Ga$ such that, after suitable gauge transformations $A(T'_{i})\rightarrow\Ga$, in $C^{\infty}$ over compact subsets of $B_{0}$. In particular the restriction of $A(T'_{i})$ to the cross-section $X\times\{1/2\}$ converges in $C^{\infty}$ to $\Ga$.
	
We will also claim that  the  limit is unique. First consider the metric 
$$dist_{L^{2}_{1}}([A],[B])=\inf_{g\in\mathcal{G}}\|A-g^{\ast}(B)\|_{L^{2}_{1}(X)} $$
on the space of equivalence classes $\mathcal{B}_{X}$ of connections over $X$. We denote by $[A(T)]$ the equivalence class of $A(T)$. Since the  path $A(T)$ is  continuous and 
$$dist_{L^{2}_{1}(X)}([A(T_{1})],[A(T_{2})])\leq \|A(T_{1})-A(T_{2})\|_{L^{2}_{1}(X)},$$
the path $[A(T)]$ is also continuous on $\mathcal{B}_{X}$ under $L_{1}^{2}$-topology. We have shown that the continuous path $A(T)$ converges (under moduli transformation) to the space of flat connections $M(P,g)$ in the sense that $dist_{L^{2}_{1}}([A_{T}],M(P,g))\rightarrow0$ over $X$ as $T\rightarrow\infty$. But the points of $M(P,g)$ are isolated so there is a $\de>0$ with $\inf_{g\in\mathcal{G}}\|g^{\ast}(\Ga)-\tilde{\Ga}\|_{L^{\frac{n}{2}}_{1}(X)}\geq\de$, See Proposition \ref{P3} for distinct points $\Ga$, $\tilde{\Ga}$ in $M(P,g)$. For large $T$ the connections $A(T)$ have distance less than $\de/2$ form some point of $M(P,g)$. But by the intermediate value theorem applied to the continuous function $\inf_{g\in\mathcal{G}}\|g^{\ast}(A)-\Ga\|_{L^{\frac{n}{2}}_{1}(X)}$  this point must be independent of $T$, so we obtain a flat connection $\Ga$ such that $A(T)$ converges to $\Ga$ in the $L^{\frac{n}{2}}_{1}$-distance. But now it follows similarly that the convergence is $C^{\infty}$, since any sequence has a $C^{\infty}$-convergent subsequence. 
\end{proof}

By Proposition \ref{3.4}, the moduli space $\M$ of finite energy instantons on $Z$ is the disjoint union of its subsets $\M(A_{-\infty},A_{+\infty})$,\ where $A_{-\infty}$ and $A_{+\infty}$ run over all components of the space of flat connection on $X$ and $\M(A_{-\infty},A_{+\infty})$ is the subset of $\M$ consisting of instantons with limits in $A_{-\infty}$ and in $A_{+\infty}$ over the two ends respectively.  We apply a key result due to Uhlenbeck for the connections with $L^{p}$-small curvature (\cite{Uhlenbeck1985} Corollary 4.3) to prove that  
\begin{corollary}\label{C4.5}
Assume the hypothesis in Proposition \ref{3.4}. Suppose also that the flat connections on $X$ are \textit{non-degenerate}. Then there exists positive constants $C,T$  with following significance. If $t>T$, there exist a family gauge transformation $g(t)$ such that
	\begin{equation*}
	\|g(t)^{\ast}(A(t))-\Ga\|_{L^{p}_{1}(X)}\leq C\|F_{A(t)}\|_{L^{p}(X)},\ \forall\ 2p>n.
	\end{equation*}
\end{corollary}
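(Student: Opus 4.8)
The plan is to place $A(t)$ in Coulomb gauge relative to the limiting flat connection $\Ga$ and then run an elliptic-estimate-plus-absorption argument, with non-degeneracy of $\Ga$ controlling the linearization. First I would use Proposition \ref{3.4}: for $t$ large, $A(t)$ converges to $\Ga$ in $C^{\infty}$ modulo gauge. Since $\Ga$ is non-degenerate, the slice $\ker d_{\Ga}^{\ast}$ is transverse to the gauge orbit through $\Ga$, so invoking Uhlenbeck's Corollary 4.3 for connections with $L^{p}$-small curvature I can find, for each $t>T$, a gauge transformation $g(t)$ (depending smoothly on $t$) with
\[
a(t):=g(t)^{\ast}(A(t))-\Ga,\qquad d_{\Ga}^{\ast}a(t)=0,
\]
and with $\|a(t)\|_{L^{p}_{1}(X)}$ as small as desired once $t$ is large. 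From now on I write $a=a(t)$ and, abusing notation, $A(t)$ for $g(t)^{\ast}(A(t))$.

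Next I would exploit the structure equation. Because $F_{\Ga}=0$,
\[
F_{A(t)}=d_{\Ga}a+a\wedge a,
\]
while gauge invariance keeps $\|F_{A(t)}\|_{L^{p}(X)}$ unchanged. Non-degeneracy of $\Ga$ together with Proposition \ref{P1} gives a uniform positive lower bound $\la$ on the least eigenvalue of $\De_{\Ga}$ across the compact moduli space $M(P,\rm{g})$, so $d_{\Ga}\oplus d_{\Ga}^{\ast}$ has trivial kernel on $\Om^{1}(X,\mathfrak{g}_{P})$ and I obtain an elliptic estimate
\[
\|a\|_{L^{p}_{1}(X)}\leq C\|(d_{\Ga}+d_{\Ga}^{\ast})a\|_{L^{p}(X)}=C\|d_{\Ga}a\|_{L^{p}(X)},
\]
using $d_{\Ga}^{\ast}a=0$; the constant $C$ can be taken independent of $t$ by compactness of $M(P,\rm{g})$.

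I would then close the estimate by absorption. Combining the two displays and bounding the quadratic term by H\"older and the Sobolev embedding $L^{p}_{1}\hookrightarrow L^{2p}$, which holds precisely when $2p>n$,
\[
\|d_{\Ga}a\|_{L^{p}(X)}\leq\|F_{A(t)}\|_{L^{p}(X)}+\|a\wedge a\|_{L^{p}(X)}\leq\|F_{A(t)}\|_{L^{p}(X)}+C\|a\|_{L^{p}_{1}(X)}^{2},
\]
so that $\|a\|_{L^{p}_{1}(X)}\leq C\|F_{A(t)}\|_{L^{p}(X)}+C\|a\|_{L^{p}_{1}(X)}^{2}$. For $t$ large the first paragraph makes $\|a\|_{L^{p}_{1}(X)}$ small enough that $C\|a\|_{L^{p}_{1}(X)}\leq\frac12$, and absorbing the quadratic term into the left-hand side gives $\|a\|_{L^{p}_{1}(X)}\leq 2C\|F_{A(t)}\|_{L^{p}(X)}$, which is the asserted bound.

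The hard part is the first step: producing the relative Coulomb gauge uniformly for all large $t$ with an elliptic constant that does not degenerate as $t\ra\infty$. This is exactly where non-degeneracy and the compactness of the flat moduli space enter (Propositions \ref{P1} and \ref{P3}), guaranteeing a uniform slice and a uniform lower eigenvalue bound, and where Uhlenbeck's Corollary 4.3 supplies the initial small gauge transformation. Once this is in place, the structure equation and the absorption are routine, the threshold $2p>n$ being dictated solely by the Sobolev embedding needed to control $a\wedge a$.
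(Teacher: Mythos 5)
Your argument is correct, but it reaches the estimate by a genuinely different mechanism than the paper. The paper applies Uhlenbeck's Corollary 4.3 as a black box: smallness of $\|F_{A(t)}\|_{L^{p}(X)}$ (obtained from Corollary \ref{C1} and the decay of $\|F_{\mathbf{A}}\|_{L^{2}(B_{t})}$) already yields a flat connection $\Gamma(t)$ and a gauge $g(t)$ with $\|g(t)^{\ast}(A(t))-\Gamma(t)\|_{L^{p}_{1}(X)}\leq C\|F_{A(t)}\|_{L^{p}(X)}$; the remaining work there is to show $[\Gamma(t)]=[\Gamma]$, which is done by combining the convergence $A(t)\rightarrow\Gamma$ with the isolation of non-degenerate flat connections (Proposition \ref{P3}). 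You instead fix the limit $\Gamma$ from the start, use the $C^{\infty}$ convergence of Proposition \ref{3.4} to place $g(t)^{\ast}(A(t))$ in the Coulomb slice through $\Gamma$ with small $L^{p}_{1}$ norm, and then re-derive the quantitative estimate by hand from the structure equation $F_{A(t)}=d_{\Gamma}a+a\wedge a$, the injectivity of $d_{\Gamma}+d_{\Gamma}^{\ast}$ on $\Omega^{1}(X,\mathfrak{g}_{P})$ (non-degeneracy), and absorption of the quadratic term via $L^{p}_{1}\hookrightarrow L^{2p}$, which is exactly where the threshold $2p\geq n$ enters. Your route avoids the $\Gamma(t)$-versus-$\Gamma$ identification but requires proving the elliptic estimate that the paper simply quotes; the constant is controlled because $\Gamma$ is a single fixed connection, so the appeal to compactness of the moduli space is not even needed there. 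One imprecision: Uhlenbeck's Corollary 4.3 produces a Coulomb gauge relative to a flat connection of its own choosing, not relative to a prescribed $\Gamma$; to obtain $d_{\Gamma}^{\ast}a(t)=0$ for the specific limit $\Gamma$ you should either invoke the local slice theorem around $\Gamma$ (using the $C^{\infty}$ convergence modulo gauge) or run the paper's identification argument first. This is easily repaired and does not affect the rest of your proof.
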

\begin{proof}
	We denote $B_{t}=[t,t+1]\times X$ and $2p>n$. For the constant $\varepsilon_{0}$ which satisfies the hypothesis in Corollary \ref{C1}, we can choose a large enough $T$ to ensure that 
	\begin{equation*}
	\|F_{\mathbf{A}}\|_{L^{2}(B_{t})}\leq \varepsilon_{0},\ \forall t>T.
	\end{equation*}
	Therefore following the estimate in Corollary \ref{C1}, we have
	\begin{equation}\nonumber
	\|F_{A(t)}\|_{L^{p}(X)}\leq({\rm{Vol}(X)})^{\frac{1}{p}}\|F_{A(t)}\|_{L^{\infty}(X)}\leq({\rm{Vol}(X)})^{\frac{1}{p}}\|F_{\mathbf{A}}\|_{L^{\infty}(B_{t})}\leq C\|F_{\mathbf{A}}\|_{L^{2}(B_{t})},
	\end{equation}
	where $C=C(X,{\rm{g}},p)$ is a positive constant. We can choose $C\|F_{\mathbf{A}}\|_{L^{2}(B_{t})}\leq\varepsilon$,
	where constant $\varepsilon$ satisfies the hypothesis in \cite{Uhlenbeck1985} Corollary 4.3. Then there exist a flat connection $\Ga(t)$  and a gauge transformation $g(t)$ such that
	\begin{equation}\nonumber
	\|g(t)^{\ast}(A(t))-\Ga(t)\|_{L^{q}_{1}(X)}\leq C(p)\|F_{A(t)}\|_{L^{q}(X)},\ \forall\ 2q>n+1.
	\end{equation}
Combining the preceding inequalities  yields
	$$\|g^{\ast}(A(t))-\Ga(t)\|_{L^{\frac{n}{2}}_{1}(X)}\leq C\|g^{\ast}(A(t))-\Ga(t)\|_{L^{q}_{1}(X)}\leq\|F_{A(t)}\|_{L^{q}(X)}\leq C\|F_{\textbf{A}}\|_{L^{2}(B_{t})}.$$
	Since the flat connections over $X$ are isolated, the limit is unique--independent of the sequence and subsequence chosen, then
	$A(t)\rightarrow\Ga$ in $C^{\infty}$ under moduli gauge transformation.  Hence for any small enough positive constant $\varepsilon_{0}$, there is a large enough constant $t$ such that,
	\begin{equation*}
	\inf_{g\in\mathcal{G}}\|g^{\ast}(A(t))-\Ga\|_{L^{\frac{n}{2}}_{1}(X)}\leq\varepsilon_{0}.
	\end{equation*}
	Therefore 
	\begin{equation}\nonumber 
	\begin{split}
	\inf_{g\in\mathcal{G}}\|\Ga(t)-\Ga\|_{L^{\frac{n}{2}}_{1}(X)}&\leq \inf_{g\in\mathcal{G}}\|g^{\ast}(A(t))-\Ga\|_{L^{\frac{n}{2}}_{1}(X)}+\inf_{g\in\mathcal{G}}\|g^{\ast}(A(t))-\Ga(t)\|_{L^{\frac{n}{2}}_{1}(X)}\\
	&\leq\varepsilon_{0}+C\|F_{\textbf{A}}\|_{L^{2}(B_{t})},\\
	\end{split}
	\end{equation}
where $C=C(X,{\rm{g}},n)$ is a positive constant. We can choose $T$ sufficiently large to ensure that $\varepsilon+C\|F_{\textbf{A}}\|_{L^{2}(B_{t})}<\de$, where $\de$ satisfies the hypothesis in Proposition \ref{P3}. Thus $[\Ga(t)]=[\Ga]$. We complete the proof of this corollary.
\end{proof}
We begin to study the decay of instantons over the ends. At first, we prove a useful lemma as follows.
\begin{lemma}\label{L2}
	Suppose that $\mathbf{A}$ is an instanton with $L^{2}$-curvature $F_{\textbf{A}}$ over $Cyl(X)$, where $X$ is a closed \textbf{good} manifold. Suppose also that the flat connections on $X$  are \textit{non-degenerate}. We then have
	\begin{equation}\label{E5}
	CS(A(T))-CS(A_{\infty})=-\int_{[T,\infty)\times{X}}Tr(F_{\mathbf{A}}\wedge\ast{F}_{\mathbf{A}}).
	\end{equation}
\end{lemma}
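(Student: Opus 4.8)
The plan is to combine the fundamental theorem of calculus for $t\mapsto CS(A(t))$ with the pointwise energy decomposition on the cylinder. First I would record that in the temporal gauge $F_{\mathbf{A}}=F_{A}+dt\wedge\dot{A}$ splits into a part with no $dt$ and a part containing $dt$, which are pointwise orthogonal, so that $-Tr(F_{\mathbf{A}}\wedge\ast F_{\mathbf{A}})=(|F_{A}|^{2}+|\dot{A}|^{2})dvol_{X}\wedge dt$; integrating over $[T,\infty)\times X$ rewrites the right-hand side of (\ref{E5}) as $\int_{[T,\infty)\times X}(|F_{A}|^{2}+|\dot{A}|^{2})$. For the left-hand side, since $\Gamma_{A}$ is the differential of $CS$ I would use $\frac{d}{dt}CS(A(t))=\Gamma_{A(t)}(\dot{A})=-2\int_{X}Tr(F_{A}\wedge\dot{A})\wedge\ast_{X}P$, and Proposition \ref{3.4} together with the finite-energy hypothesis guarantees that $CS(A(t))$ has a limit $CS(A_{\infty})=CS(\Ga)$ as $t\to\infty$, so that $CS(A(T))-CS(A_{\infty})=-\int_{T}^{\infty}\frac{d}{dt}CS(A(t))\,dt$.

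The next step is to evaluate the integrand $Tr(F_{A}\wedge\dot{A})\wedge\ast_{X}P$ using the first instanton equation in (\ref{2.1}). Writing $\ast_{X}\dot{A}=-\ast_{X}P\wedge F_{A}$ and pairing with $\dot{A}$, a reordering of the $\mathfrak{g}_{P}$-valued forms (graded cyclicity of the trace) gives $Tr(F_{A}\wedge\dot{A})\wedge\ast_{X}P=|\dot{A}|^{2}dvol_{X}$. Hence $\frac{d}{dt}CS(A(t))=-2\int_{X}|\dot{A}|^{2}$ and the left-hand side of (\ref{E5}) equals $2\int_{[T,\infty)\times X}|\dot{A}|^{2}$. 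Comparing with the right-hand side, the lemma is reduced to the slicewise identity $\int_{X}|F_{A}|^{2}=\int_{X}|\dot{A}|^{2}$.

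To obtain this identity I would feed the second equation in (\ref{2.1}), namely $\ast_{X}F_{A}=-\dot{A}\wedge\ast_{X}P-\ast_{X}Q\wedge F_{A}$, into $\int_{X}|F_{A}|^{2}=-\int_{X}Tr(F_{A}\wedge\ast_{X}F_{A})$. The $\ast_{X}P$ term reproduces $\int_{X}|\dot{A}|^{2}$ exactly as above, leaving the discrepancy $\int_{X}|F_{A}|^{2}-\int_{X}|\dot{A}|^{2}=\int_{X}\ast_{X}Q\wedge Tr(F_{A}\wedge F_{A})$. The crucial observation is that this quantity is independent of $t$: differentiating in $t$ and using $\dot{F}_{A}=d_{A}\dot{A}$, the Bianchi identity, and closedness of $Tr(F_{A}\wedge F_{A})$ give $\frac{d}{dt}Tr(F_{A}\wedge F_{A})=2\,d\,Tr(\dot{A}\wedge F_{A})$, so that an integration by parts moves $d$ onto $\ast_{X}Q$ and the \textbf{good} condition $d\ast_{X}Q=0$ forces the $t$-derivative to vanish. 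Since $F_{A(t)}\to0$ as $t\to\infty$ by Proposition \ref{3.4}, this $t$-constant must be zero, which yields $\int_{X}|F_{A}|^{2}=\int_{X}|\dot{A}|^{2}$ on every slice and completes the proof.

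The main obstacle is precisely this last step: the naive gradient-flow computation only produces $2\int|\dot{A}|^{2}$, and matching it to the full cylinder energy $\int(|F_{A}|^{2}+|\dot{A}|^{2})$ requires controlling the ``topological'' term $\int_{X}\ast_{X}Q\wedge Tr(F_{A}\wedge F_{A})$. Showing it is $t$-independent is where the hypothesis $d\ast_{X}Q=0$ is genuinely used, and showing it vanishes in the limit is where the non-degeneracy of flat connections (via Proposition \ref{3.4}, giving $F_{A(t)}\to0$) enters. Secondary care is needed to justify the fundamental theorem of calculus on the noncompact interval $[T,\infty)$, i.e.\ that $\int_{T}^{\infty}\frac{d}{dt}CS(A(t))\,dt$ converges and $CS(A(t))\to CS(\Ga)$, which follows from the $L^{2}$ finiteness of $F_{\mathbf{A}}$ and the $C^{\infty}$ convergence $A(t)\to\Ga$.
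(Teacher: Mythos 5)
Your argument is correct and is essentially the paper's own proof in a slightly different packaging: the paper expands $-2\,Tr(F_{A}\wedge dt\wedge\dot{A})\wedge\ast_{X}P$ as $-Tr(F_{\mathbf{A}}\wedge F_{\mathbf{A}})\wedge\ast\Om$ plus the Chern--Weil term $Tr(F_{A}\wedge F_{A})\wedge\ast_{X}Q$, and disposes of the latter exactly as you do (it is $t$-independent because $d\ast_{X}Q=0$, and vanishes because the limit connection $\Ga$ from Proposition \ref{3.4} is flat). Your intermediate slicewise identity $\|F_{A(T)}\|^{2}_{L^{2}(X)}=\|\dot{A}(T)\|^{2}_{L^{2}(X)}$ is the same computation the paper records in the proof of Theorem \ref{T1}.
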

\begin{proof}
	Following the definition of $CS$, we then have
	\begin{equation}\nonumber
	\begin{split}
	&\quad	CS(A(T'))-CS(A(T))=-2\int_{[T,T']\times X}Tr(F_{A(t)}\wedge dt\wedge\dot{A}(t))\wedge\ast_{X}P\\
	&=-\int_{[T,T']\times X}Tr(F_{\mathbf{A}}\wedge F_{\mathbf{A}})\wedge\ast \Om+\int_{T}^{T'}\Big{(}\int_{X}Tr(F_{A(t)}\wedge F_{A(t)})\wedge\ast_{X}Q\Big{)}dt\\
	\end{split}
	\end{equation}
	Following Proposition \ref{3.4}, there exist a flat connection $\Ga$ over $X$ such that, $\mathbf{A}|_{X\times \{T_{i}\}}$ converges to $\Ga$ in $C^{\infty}$ after suitable gauge transformations. Since $Q$ is co-closed, following Chern-Weil theory, we have 
	\begin{equation*}
	\begin{split}
	\int_{X}Tr(F_{A(t)}\wedge F_{A(t)})\wedge\ast_{X}Q&=\lim_{t\rightarrow\infty}\int_{X}Tr(F_{A(t)}\wedge F_{A(t)})\wedge\ast_{X}Q\\
	&=\int_{X}Tr(F_{\Ga}\wedge F_{\Ga})\wedge\ast_{X}Q=0.\\
	\end{split}
	\end{equation*}
	Taking the limit over finite tubes $(T,T')\times X$ with $T'\rightarrow +\infty$ and $\mathbf{A}$ satisfies the instanton equation, we prove the identity (\ref{E5}). 
\end{proof}
\begin{theorem}\label{T1}
	Suppose $\mathbf{A}$ is a smooth solution of instanton (\ref{E0}) with $L^{2}$-curvature $F_{\mathbf{A}}$. Suppose also that the flat connections on $X$ are \textit{non-degenerate}. Then there are positive constants $C',C''$ such that
	\begin{equation*}
	|F_{\mathbf{A}}|\leq C''e^{-C'|t|},
	\end{equation*}
	for sufficiently large $|t|$.
\end{theorem}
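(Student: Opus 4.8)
The plan is to combine the Chern--Simons identity of Lemma \ref{L2} with the gauge-fixing estimate of Corollary \ref{C4.5} to produce a difference inequality for the tail energy, and then to upgrade integral decay to pointwise decay through the $\varepsilon$-regularity Theorem \ref{2.19}. Throughout write $A(t):=\mathbf{A}|_{X\times\{t\}}$, let $\Ga$ be the limiting flat connection furnished by Proposition \ref{3.4}, and set
\begin{equation*}
J(T):=\int_{[T,\infty)\times X}|F_{\mathbf{A}}|^{2}\,dvol.
\end{equation*}
Since $(P,Q)$ is co-closed, Lemma \ref{L2} identifies this tail energy with a difference of Chern--Simons values,
\begin{equation*}
J(T)=CS(A(T))-CS(A_{\infty})=CS(A(T))-CS(\Ga)\geq0,
\end{equation*}
and $J$ is non-increasing with $J(T)\to0$. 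In particular $\|F_{\mathbf{A}}\|^{2}_{L^{2}([T-1,T+1]\times X)}=J(T-1)-J(T+1)$, so the whole problem reduces to extracting exponential decay of $J$.

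The heart of the argument, and the step I expect to be the main obstacle, is the \emph{energy-gap inequality}
\begin{equation*}
J(T)\leq C\,\|F_{\mathbf{A}}\|^{2}_{L^{2}([T-1,T+1]\times X)}\qquad(T\ \text{large}).
\end{equation*}
To prove it I would fix $p>n$ and, using Corollary \ref{C4.5}, pass to a gauge in which $a:=g(T)^{\ast}(A(T))-\Ga$ satisfies $\|a\|_{L^{p}_{1}(X)}\leq C\|F_{A(T)}\|_{L^{p}(X)}$; by the Sobolev embedding $L^{p}_{1}\hookrightarrow C^{0}$ this controls every $L^{q}$-norm of $a$. Since $\Ga$ is flat, hence an instanton, the constant in the Chern--Simons expansion vanishes, and using $F_{A(T)}=d_{\Ga}a+a\wedge a$ one gets the quadratic bound
\begin{equation*}
|CS(A(T))-CS(\Ga)|\leq C\big(\|a\|_{L^{2}}\|d_{\Ga}a\|_{L^{2}}+\|a\|^{3}_{L^{3}}\big)\leq C\|F_{A(T)}\|^{2}_{L^{p}(X)},
\end{equation*}
where for large $T$ the cubic term is absorbed because $\|F_{A(T)}\|$ is small. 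Finally I would convert the slice norm into band energy by the interior $L^{\infty}$ estimate: applying Corollary \ref{C1} (equivalently Theorem \ref{2.19}) on the band $[T-1,T+1]\times X$ gives $\|F_{A(T)}\|_{L^{\infty}(X)}\leq C\|F_{\mathbf{A}}\|_{L^{2}([T-1,T+1]\times X)}$, whence $\|F_{A(T)}\|^{2}_{L^{p}(X)}\leq C\|F_{\mathbf{A}}\|^{2}_{L^{2}([T-1,T+1]\times X)}$. Conceptually, non-degeneracy makes $\Ga$ a Morse critical point of $CS$, so its height above the critical value is quadratically controlled by the gradient (the curvature); the uniform spectral gap of Proposition \ref{P1} is exactly what keeps the constant $C$ independent of $T$.

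Granting the energy-gap inequality, write $\mathcal{E}(T):=J(T)$, so that $\mathcal{E}(T)\leq C(\mathcal{E}(T-1)-\mathcal{E}(T+1))$. Because $\mathcal{E}$ is non-increasing we have $\mathcal{E}(T+1)\leq\mathcal{E}(T)$, and rearranging gives $(1+C)\mathcal{E}(T+1)\leq C\,\mathcal{E}(T-1)$, i.e. $\mathcal{E}(T+1)\leq\theta\,\mathcal{E}(T-1)$ with $\theta:=C/(1+C)<1$. Iterating over steps of length $2$ and interpolating with monotonicity yields $J(T)\leq C'e^{-cT}$ for some $c>0$. To pass to the pointwise statement, for $t$ large I estimate $\|F_{\mathbf{A}}\|^{2}_{L^{2}([t-1,t+1]\times X)}\leq J(t-1)\leq C'e^{-c(t-1)}$ and apply the $\varepsilon$-regularity Theorem \ref{2.19} on a geodesic ball about $(t,x)$ contained in the band, obtaining
\begin{equation*}
|F_{\mathbf{A}}|(t,x)\leq C\,\|F_{\mathbf{A}}\|_{L^{2}([t-1,t+1]\times X)}\leq C''e^{-C'|t|}.
\end{equation*}
The end $t\to-\infty$ is entirely analogous, working with the tail energy $\int_{(-\infty,T]\times X}|F_{\mathbf{A}}|^{2}$ in place of $J(T)$; together these give the claimed bound for all sufficiently large $|t|$.
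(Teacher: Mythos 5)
Your proposal is correct and follows essentially the same route as the paper: the tail energy $J(T)$ is identified with $CS(A(T))-CS(A_{\infty})$ via Lemma \ref{L2}, the Chern--Simons difference is bounded quadratically by the slice curvature using the Coulomb/relative gauge of Corollary \ref{C4.5} and the spectral gap of Proposition \ref{P1}, and the integral decay is upgraded to pointwise decay by Corollary \ref{C1}. The only deviation is that you extract exponential decay from the discrete difference inequality $J(T)\leq C\big(J(T-1)-J(T+1)\big)$ over bands, whereas the paper uses the exact identity $\frac{d}{dT}J(T)=-2\|F_{A(T)}\|^{2}_{L^{2}(X)}$ (which relies on $\|F_{A(T)}\|_{L^{2}(X)}=\|\dot{A}(T)\|_{L^{2}(X)}$ from the instanton equations and Chern--Weil) to get the differential inequality $J\leq -C\,dJ/dT$; both yield the same conclusion.
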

\begin{proof}
	We prove this using a differential inequality derived from the instanton on $Cyl(X)$. Our proof here is similar to Donaldson's arguments in \cite{Donaldson} Section 4.2 for ASD connection. For $T>0$, we set
	\begin{equation*}
	J(T)=\int_{T}^{\infty}\|F_{\mathbf{A}}\|_{L^{2}(X)}^{2}=-\int_{[T,\infty)\times X}Tr(F_{\mathbf{A}}\wedge\ast F_{\mathbf{A}})=\int_{[T,\infty)\times X}Tr(F_{\mathbf{A}}\wedge F_{\mathbf{A}})\wedge\ast\Om.
	\end{equation*}
	Following Lemma \ref{L2}, we have
	\begin{equation}\label{3.5}
	J(T)=CS(A(T))-CS(A_{\infty})
	\end{equation}
	where $A(T)$ is the connection over $X$ obtain by restriction to $X\times\{T\}$. Following (\ref{3.5}), we obtain the $T$ derivative of $J$ as
	\begin{equation}\label{3.2}
	\frac{d}{dT}J(T)=\frac{d}{dT}\big{(}CS(A(T))-CS(A_{\infty})\big{)}.
	\end{equation}
	On the other hand, the $T$ derivative of $J(T)$ can be expressed as minus the integration over $X\times\{T\}$ of the curvature density $|F_{\mathbf{A}}|^{2}$, and this is exactly the $n$-dimensional curvature density $|F_{A(T)}|^{2}$ plus the density $|\dot{A}|^{2}$. By the relation Equation (\ref{2.1}) between the two components of the curvature for an instanton, we have
	\begin{equation*}
	\begin{split}
	\|F_{A(T)}\|^{2}_{L^{2}(X)}&=\int_{X}Tr(F_{A}\wedge\dot{A}\wedge\ast_{X}P)+\int_{X}Tr(F_{A}\wedge F_{A})\wedge\ast_{X}Q\\
	&=\|\dot{A}(T)\|_{L^{2}(X)}^{2}+\int_{X}Tr(F_{A}\wedge F_{A})\wedge\ast_{X}Q.\\
		\end{split}
	 \end{equation*}
   Following Chern-Weil theory, we have $\int_{X}Tr(F_{A}\wedge F_{A})\wedge\ast_{X}Q=0$, one also can see the proof of Lemma \ref{L2}. Hence  $\|F_{A(T)}\|^{2}_{L^{2}(X)}=\|\dot{A}(T)\|_{L^{2}(X)}^{2}$. Thus
	\begin{equation}\label{3.3}
	\frac{d}{dT}J(T)=-2\|F_{A(T)}\|^{2}_{L^{2}(X)}
	\end{equation}
	From (\ref{3.2}) and (\ref{3.3}), we have
	\begin{equation*}
	\frac{d}{dT}\big{(}CS(A(T))-CS(A_{\infty})\big{)}=-2\|F_{A(T)}\|^{2}_{L^{2}(X)}
	\end{equation*}
	To connect these two observations we establish an inequality between the Chern-Simon function $CS(A(T))$ and $\|F_{A(T)}\|_{L^{2}(X)}$, valid for any connection over $X$ which is close to $A_{\infty}$.\ We write, for fixed large $T$,
	\begin{equation*}
	A(T)=A_{\infty}+a,
	\end{equation*}
	where $A_{\infty}$ is a flat connection over $X$, so we may suppose that $a$ is small as we please in $C^{\infty}$. Also,we may suppose that $a$ satisfies the Coulomb gauge fixing:
	\begin{equation*}
	d_{A_{\infty}}^{\ast}a=0.
	\end{equation*}
	Now, we have
	\begin{equation}\nonumber
	\begin{split}
	CS(A(T))-CS(A_{\infty})&=-\int_{X}Tr(d_{A_{\infty}}a\wedge a+\frac{2}{3}a\wedge a\wedge a)\wedge\ast_{X}P\\
	&=-\int_{X}Tr(\frac{1}{3}d_{A_{\infty}}a\wedge a+\frac{2}{3}F_{A(T)}\wedge a)\wedge\ast_{X}P.\\
	\end{split}
	\end{equation}
	We use the fact that the kernel of $d_{A_{\infty}}+d_{A_{\infty}}^{\ast}$ in $\Om^{1}$ is trivial, so following Proposition \ref{P1} there exist a positive constant $\la$ such that
	\begin{equation*}
	\|a\|_{L^{2}(X)}\leq\la\|d_{A_{\infty}}a\|_{L^{2}(X)}.
	\end{equation*}
	We observe that
	\begin{equation}\nonumber
	\begin{split}
	|\int_{X}Tr(d_{A_{\infty}}a\wedge a)\wedge\ast_{X}P|&\leq\|a\|_{L^{2}(X)}\|d_{A_{\infty}}a\|_{L^{2}(X)}\max_{X}|P|\\
	&\leq C\la\|d_{A_{\infty}}a\|^{2}_{L^{2}(X)},
	\end{split}
	\end{equation}
	and
	\begin{equation}\nonumber
	\begin{split}
	|\int_{X}Tr(F_{A(T)}\wedge a)\wedge\ast_{X}Q|&\leq\|F_{A(T)}\|_{L^{2}(X)}\|a\|_{L^{2}(X)}\max_{X}|Q|\\
	&\leq C\la\|d_{A_{\infty}}a\|_{L^{2}(X)}\|F_{A(T)}\|_{L^{2}(X)},\\
	\end{split}
	\end{equation}
	where $C=C(X,P,Q)$ is a positive constant.Hence, we get
	\begin{equation}\label{E3.5}
	CS(A(T))-CS(A_{\infty})\leq C\la(\|d_{A_{\infty}}a\|^{2}_{L^{2}(X)}+\|d_{A_{\infty}}a\|_{L^{2}(X)}\|F_{A(T)}\|_{L^{2}(X)}).
	\end{equation}
	We denote $B_{t}:=[t,t+1]\times X$. Let $p\geq n$ be a positive constant. Following Corollary \ref{C4.5} and Sobolev embedding $L^{p}_{1}\hookrightarrow L^{\infty}$, for large enough $T$, we obtain that
	\begin{equation}\label{E4.8}
	\|a(t)\|_{L^{\infty}(X)}\leq C\|a(t)\|_{L^{p}_{1}(X)}\leq C\|F_{A(t)}\|_{L^{p}(X)}\leq C\|F_{\mathbf{A}}\|_{L^{p}(B_{t})}\leq C\|F_{\mathbf{A}}\|_{L^{2}(B_{t})}.\\
	\end{equation}
	where $C=C(X,{\rm{g}},p)$ is a positive constant. On the other hand $F_{A(T)}=F_{A_{\infty}+a}=d_{A_{\infty}}a+a\wedge a$. Therefore
	\begin{equation}\label{E3.7}
	\|F_{A(T)}\|_{L^{2}(X)}\geq\|d_{A_{\infty}}a\|_{L^{2}(X)}-\|a\wedge a\|_{L^{2}(X)}\geq\|d_{A_{\infty}}a\|_{L^{2}(X)}-\|a\|_{L^{\infty}(X)}\|a\|_{L^{2}(X)}.\\
	\end{equation}
	Combining the preceding inequalities (\ref{E4.8}) and (\ref{E3.7}) yields,
	\begin{equation}\nonumber
	\|F_{A(T)}\|_{L^{2}(X)}\geq\|d_{A_{\infty}}a\|_{L^{2}(X)}-C\la\|F_{\mathbf{A}}\|_{L^{2}(B_{t})}\|d_{A_{\infty}}a\|_{L^{2}(X)}.\\
	\end{equation}
	Provided $C\la\|F_{\mathbf{A}}\|_{L^{2}(B_{t})}\leq1/2$, rearrangement gives
	\begin{equation}\label{E3.8}
	\|F_{A(T)}\|_{L^{2}(X)}\geq 1/2\|d_{A_{\infty}}a\|_{L^{2}(X)}.
	\end{equation}
	Combining the preceding inequalities (\ref{E3.5}) and (\ref{E3.8}) yields,
	\begin{equation}\nonumber
	CS(A(T))-CS(A_{\infty})\leq C\|F_{A(T)}\|^{2}_{L^{2}(X)}.
	\end{equation}
	Putting all these together, we get a differential inequality, when $T$ is large enough,
	\begin{equation*}
	J(T)\leq-C\frac{d }{dT}J(T).
	\end{equation*}
	It is easy to see that this implies that $J$ decays exponentially,
	\begin{equation*}
	J(T)\leq C''e^{-C'T},
	\end{equation*}
	where $C'=(C)^{-1}$ and $C''=J(T_{0})e^{-C'T_{0}}$. For the constant $\varepsilon_{0}$ which satisfies the hypothesis in Corollary \ref{C1}, we can choose a large enough $T$ to ensure that  $$\|F_{\mathbf{A}}\|_{L^{2}(B_{T})}\leq J(T)\leq C''e^{-C'T}\leq \varepsilon_{0},$$ 
	where $B_{T}=[T,T+1]\times X$. Finally, following the estimate in Corollary \ref{C1}, it implies that $|F_{\mathbf{A}}|$ has  exponential decay.
\end{proof}
We observe that the flat connections on a $G$-bundle over a closed Calabi-Yau $3$-fold or $G_{2}$-manifold with full holonomy, i.e., the manifold has finite fundamental group, are all  \textit{non-degenerate}. As a simply application, we have
\begin{corollary}
	Let $X$ be a closed Calabi-Yau $3$-fold (or $G_{2}$-manifold) with full holonomy, $P$ be a principal $G$-bundle over cylinder $Cyl(X)$ with $G$ being a compact Lie group. If $\mathbf{A}$ is a $G_{2}$- (or $Spin(7)$-) instanton with $L^{2}$-curvature $F_{\mathbf{A}}$, then there exist positive constants $C',C''$ such that
	\begin{equation*}
	|F_{\mathbf{A}}|\leq C''e^{-C'|t|},
	\end{equation*}
	for sufficiently large $|t|$.	
\end{corollary}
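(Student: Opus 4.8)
The plan is to recognize this as a direct specialization of Theorem \ref{T1}, so that the work reduces to verifying that both hypotheses of that theorem are satisfied for the manifolds in question. First I would confirm that a Calabi-Yau $3$-fold (resp.\ parallel $G_{2}$-manifold), equipped with the data $(P,Q)$ described in the Example following Definition \ref{D2}, is a \textbf{good} manifold. In the Calabi-Yau case one takes $(P,Q)=(Re\,\Om,\frac{1}{2}\w^{2})$; since $\w$ and $\Om$ are parallel with respect to the Levi-Civita connection, both $P$ and $Q$ are closed and co-closed, so in particular $d\ast_{X}P=d\ast_{X}Q=0$. In the $G_{2}$ case one takes $(P,Q)=(\phi,\ast_{X}\phi)$; parallelism of $\phi$ gives $d\phi=0$ and $d\ast_{X}\phi=0$, and since $\ast_{X}\ast_{X}\phi=\phi$ in dimension $7$, one obtains $d\ast_{X}P=d\ast_{X}\phi=0$ and $d\ast_{X}Q=d\phi=0$. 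Hence $X$ is good in either case, and by construction the $G_{2}$- (resp.\ $Spin(7)$-) instanton equation is precisely equation (\ref{E0}) for this choice of $(P,Q)$, so that $\mathbf{A}$ is a solution of (\ref{E0}) with $L^{2}$-curvature.

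The second hypothesis of Theorem \ref{T1} is the non-degeneracy of the flat connections on $X$. This is exactly the content of Proposition \ref{P2}: the full holonomy of a closed $G_{2}$- or Calabi-Yau manifold (equivalently, finiteness of $\pi_{1}(X)$) forces every flat connection to be non-degenerate, via the Weitzenb\"{o}ck formula together with the vanishing Ricci curvature and the holonomy-reduction dichotomy carried out there. With both hypotheses now verified, Theorem \ref{T1} applies verbatim and yields the desired bound $|F_{\mathbf{A}}|\leq C''e^{-C'|t|}$ for sufficiently large $|t|$.

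Since every analytic ingredient is already established in the preceding sections, there is no genuine obstacle to overcome here; the corollary is an assembly step. The only points requiring care are bookkeeping ones: confirming that the specific $(P,Q)$ attached to each special-holonomy structure satisfy the co-closed conditions of Definition \ref{D2}, and matching the ``full holonomy'' hypothesis against Proposition \ref{P2} so as to supply the non-degeneracy input demanded by Theorem \ref{T1}. Once these identifications are made, the exponential decay follows immediately from the theorem.
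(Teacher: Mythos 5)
Your proposal is correct and is exactly the argument the paper intends (the corollary is stated there without a written proof, as an immediate application of Theorem \ref{T1} once Proposition \ref{P2} supplies non-degeneracy and the Example after Definition \ref{D2} supplies the \textbf{good}-manifold structure). Your verification that the specific $(P,Q)$ are co-closed by parallelism is a useful bookkeeping step the paper leaves implicit, but it does not change the route.
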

\section{Real Killing  spinor manifold}
Let $(X,\rm{g})$ be a real Killing spinor compact manifold of dimension $n$, i.e., there are non-zero $3$-form $P$ and $4$-form $Q$ which satisfy
\begin{equation*}
dP=4Q,\ d\ast_{X}Q=(n-3)\ast_{X}P,
\end{equation*}
where $\ast_{X}$ is the Hodge star operator on $X$. For $n>3$, the Chern-Simons functional can then be written as
\begin{equation}\label{E4.6}
CS(A)=-\frac{1}{2(n-3)}\int_{X}Tr(F_{A}\wedge F_{A})\wedge\ast_{X}Q,
\end{equation}
which is gauge-invariant. We consider the cylinder $Cyl(X)$ over $X$. The instanton equation (\ref{E0}) on the cylinder splits into the two equations (\ref{2.1}). The gradient flow equation for $CS(A)$ is then the first equation (\ref{2.1}). The gradient flow of Chern-Simons functional (\ref{E4.6}) may not be equivalent to  the instanton equation (\ref{2.1}). But if $X$ is a nearly parallel $G_{2}$- or nearly K\"{a}hler manifold, the first equation on (\ref{2.1}) implies the second equation. So on a nearly parallel $G_{2}$- or nearly K\"{a}hler  manifold, the instanton equation on the cylinder is equivalent to the gradient flow for $CS(A)$, See \cite{HN} Section 4.4 and \cite{HILP}.
\begin{proposition}\label{P4}(Energy identity). Let $A$ be a smooth solution to the gradient flow of equation (\ref{E4.6}). For any $t_{1}<t_{2}$, we then have
	\begin{equation}\label{E4}
	CS(A(t_{2}))-CS(A(t_{1}))=-\int_{t_{1}}^{t_{2}}\|F_{A(t)}\wedge\ast_{X} P\|^{2}_{L^{2}(X)}.
	\end{equation}
\end{proposition}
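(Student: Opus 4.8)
The statement is the familiar fact that along a gradient flow the functional decreases at a rate equal to the squared $L^{2}$-norm of the velocity. The plan is to differentiate $CS(A(t))$ in $t$ using the closed-form expression (\ref{E4.6}), reduce the resulting boundary-free integral by means of the real Killing spinor relation $d\ast_{X}Q=(n-3)\ast_{X}P$, and finally invoke the gradient flow equation $\ast_{X}\dot{A}=-\ast_{X}P\wedge F_{A}$ to identify the integrand with $|F_{A}\wedge\ast_{X}P|^{2}$. Everything reduces to computing $\frac{d}{dt}CS(A(t))$ and integrating in $t$.

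First I would differentiate (\ref{E4.6}). Since $\pa_{t}F_{A}=d_{A}\dot{A}$, the Bianchi identity $d_{A}F_{A}=0$ together with the Leibniz rule gives $\pa_{t}Tr(F_{A}\wedge F_{A})=2Tr(d_{A}\dot{A}\wedge F_{A})=2\,d\,Tr(\dot{A}\wedge F_{A})$. Substituting into (\ref{E4.6}) yields $\frac{d}{dt}CS(A(t))=-\frac{1}{n-3}\int_{X}d\,Tr(\dot{A}\wedge F_{A})\wedge\ast_{X}Q$. Because $X$ is closed, I would integrate by parts, transferring the exterior derivative from the degree-three form $Tr(\dot{A}\wedge F_{A})$ onto $\ast_{X}Q$; the total-derivative term integrates to zero, leaving $\frac{d}{dt}CS(A(t))=-\frac{1}{n-3}\int_{X}Tr(\dot{A}\wedge F_{A})\wedge d\ast_{X}Q$. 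Here the Killing spinor relation $d\ast_{X}Q=(n-3)\ast_{X}P$ is exactly what absorbs the prefactor $1/(n-3)$, producing $\frac{d}{dt}CS(A(t))=-\int_{X}Tr(\dot{A}\wedge F_{A})\wedge\ast_{X}P$.

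Finally I would insert the gradient flow equation, writing $F_{A}\wedge\ast_{X}P=\ast_{X}P\wedge F_{A}=-\ast_{X}\dot{A}$ (the degrees of $\ast_{X}P$ and $F_{A}$ make the two wedge orders agree). This turns the integrand into $Tr(\dot{A}\wedge F_{A})\wedge\ast_{X}P=Tr\big{(}\dot{A}\wedge(F_{A}\wedge\ast_{X}P)\big{)}=-Tr(\dot{A}\wedge\ast_{X}\dot{A})$, so that $\frac{d}{dt}CS(A(t))=\int_{X}Tr(\dot{A}\wedge\ast_{X}\dot{A})=-\|\dot{A}\|^{2}_{L^{2}(X)}$. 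Since $\ast_{X}$ is an $L^{2}$-isometry, $\|\dot{A}\|_{L^{2}(X)}=\|\ast_{X}\dot{A}\|_{L^{2}(X)}=\|F_{A(t)}\wedge\ast_{X}P\|_{L^{2}(X)}$, whence $\frac{d}{dt}CS(A(t))=-\|F_{A(t)}\wedge\ast_{X}P\|^{2}_{L^{2}(X)}$. Integrating over $[t_{1},t_{2}]$ gives the asserted identity (\ref{E4}).

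The only genuine difficulty is sign and constant bookkeeping: one must keep track of the form degrees in the integration by parts, the trace symmetry $Tr(\dot{A}\wedge F_{A})$ versus $Tr(F_{A}\wedge\dot{A})$, and the sign of $\ast_{X}^{2}$ on the relevant degree, so that the factor $1/(n-3)$ is precisely cancelled by the Killing spinor relation and no spurious constant survives. Conceptually nothing is hidden beyond the standard gradient-flow computation, and the closed-ness of $X$ (hence the vanishing of all boundary terms on each time slice) is what makes the integration by parts clean.
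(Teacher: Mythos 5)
Your proof is correct and follows essentially the same route as the paper's: differentiate $CS(A(t))$, use the Bianchi identity and Stokes' theorem to trade $d$ for $d\ast_{X}Q=(n-3)\ast_{X}P$ (cancelling the $1/(n-3)$), and then invoke the flow equation $\ast_{X}\dot{A}=-\ast_{X}P\wedge F_{A}$ to identify the remaining integral with $-\|F_{A}\wedge\ast_{X}P\|^{2}_{L^{2}(X)}$. The only difference is presentational: you spell out the integration by parts and the $L^{2}$-isometry step that the paper compresses into a single displayed chain of equalities.
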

\begin{proof}
Following Bianchi identity $d_{A}F_{A}=0$ and the fact $d\ast_{X}Q=(n-3)\ast_{X}P$, it is easy to check that
	\begin{equation*}
	dtr(F_{A}\wedge F_{A}\wedge\ast_{X}Q)=(n-3)tr(F_{A}\wedge F_{A})\wedge\ast_{X}P)
	\end{equation*}
	We then have
	\begin{equation*}
	\begin{split}
	\frac{\pa }{\pa t}CS(A(t))&=-\frac{1}{n-3}\int_{X}Tr(\frac{\pa F_{A}}{\pa t}\wedge F_{A}\wedge \ast_{X}Q)\\
	&=-\frac{1}{n-3}\int_{X}Trd_{A}(\frac{\pa A}{\pa t})\wedge F_{A}\wedge\ast_{X}Q\\
	&=-\frac{1}{n-3}\int_{X}dTr(\frac{\pa A}{\pa t}\wedge F_{A}\wedge\ast_{X}Q)-\int_{X}Tr(\frac{\pa A}{\pa t}\wedge F_{A}\wedge \ast_{X}P)\\
	&=-\|F_{A}\wedge\ast_{X}P\|^{2}_{L^{2}(X)}.\\
	\end{split}
	\end{equation*}
	Equality (\ref{E4}) follows from integrating the above identity on $[t_{1}, t_{2}]$.
\end{proof}
We define the energy density $\rho(\mathbf{A})$ by
\begin{equation}\label{E3}
\rho(\mathbf{A}):=\lim_{T\rightarrow\infty}\frac{1}{2T}\int_{(-T,T)\times X}|F_{\mathbf{A}}|^{2}dt\wedge dvol_{X}.
\end{equation}
We write $\a\lesssim\b$ to mean that $\a\leq C\b$ for some positive constant $C$ independent of certain parameters on which $\a$ and $\b$ depend. The parameters on which $C$ is independent will be clear or specified at each occurrence. We also use $\b\lesssim\a$ and $\a\approx\b$ analogously.
\begin{lemma}\label{L4.5}
	Let $X$ be a complete manifold of dimension $n$ with a $d$(bounded) $k$-form $\w$, i.e.. there exists a bounded $(k-1)$-form $\theta$ such that $\w=d\theta$, $\a$ be a closed from of degree $n-k$. If $\a$ satisfies
	\begin{equation}\label{E4.7}
	\lim_{r\rightarrow\infty}\frac{1}{r}\int_{B_{r}(x_{0})}|\a|dvol_{X}=0,
	\end{equation} 
	where $x_{0}$ is a point on $X$, $B_{r}(x_{0})$ is a geodesic ball, then there exists a sequence $j_{i}\rightarrow\infty$ as $i\rightarrow\infty$ such that
	\begin{equation*}
	\lim_{i\rightarrow\infty}\int_{B_{j_{i}}(x_{0})}\a\wedge\w=0.
	\end{equation*}
\end{lemma}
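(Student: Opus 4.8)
The plan is to use that the top-degree integrand $\alpha\wedge\omega$ is exact, transfer its integral to the boundary spheres via Stokes' theorem, and then invoke the sub-linear growth hypothesis to kill the resulting boundary integrals along a carefully chosen sequence of radii. The whole argument rests on a single algebraic observation combined with the coarea formula.

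First I would record the exactness. Since $\alpha$ is closed and $\omega=d\theta$, the Leibniz rule gives
\begin{equation*}
d(\alpha\wedge\theta)=d\alpha\wedge\theta+(-1)^{n-k}\alpha\wedge d\theta=(-1)^{n-k}\alpha\wedge\omega,
\end{equation*}
so $\alpha\wedge\omega=(-1)^{n-k}d(\alpha\wedge\theta)$. Because $X$ is complete the closed balls $\overline{B_r(x_0)}$ are compact, so for radii $r$ at which $\partial B_r(x_0)$ is a rectifiable hypersurface Stokes' theorem yields
\begin{equation*}
\int_{B_r(x_0)}\alpha\wedge\omega=(-1)^{n-k}\int_{\partial B_r(x_0)}\alpha\wedge\theta.
\end{equation*}
Since $\theta$ is bounded, $|\alpha\wedge\theta|\leq\|\theta\|_{L^{\infty}(X)}|\alpha|$ pointwise, and therefore
\begin{equation*}
\Big|\int_{B_r(x_0)}\alpha\wedge\omega\Big|\leq\|\theta\|_{L^{\infty}(X)}\int_{\partial B_r(x_0)}|\alpha|\,d\sigma,
\end{equation*}
where $d\sigma$ is the induced measure on $\partial B_r(x_0)$.

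The remaining task is to produce radii $j_i\to\infty$ along which the boundary mass $g(r):=\int_{\partial B_r(x_0)}|\alpha|\,d\sigma$ tends to $0$. Here I would use the coarea formula for the Lipschitz distance function $\rho=d(\cdot,x_0)$, which gives $f(r):=\int_{B_r(x_0)}|\alpha|\,dvol_X=\int_0^r g(s)\,ds$. The hypothesis \eqref{E4.7} reads $f(r)/r\to 0$. If $\liminf_{r\to\infty}g(r)$ were a positive number $c$, then $g(r)\geq c/2$ for all large $r$, whence $f(r)\geq (c/2)(r-R_0)$ and $f(r)/r\to c/2>0$, a contradiction. Hence $\liminf_{r\to\infty}g(r)=0$, and selecting radii realizing this lower limit produces a sequence $j_i\to\infty$ with $\int_{B_{j_i}(x_0)}\alpha\wedge\omega\to 0$, which is the assertion.

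The main obstacle is the lack of smoothness of geodesic spheres on a general complete manifold: the distance function $\rho$ is only Lipschitz (with $|\nabla\rho|=1$ almost everywhere), so $\partial B_r(x_0)$ need not be a smooth submanifold for every $r$, owing to the cut locus. I would circumvent this with geometric measure theory: the balls $B_r(x_0)$ are sets of finite perimeter, the coarea formula gives $f(r)=\int_0^r g(s)\,ds$ with $g(s)=\int_{\rho^{-1}(s)}|\alpha|\,d\mathcal{H}^{n-1}$, and the Gauss--Green (Stokes) identity for $\alpha\wedge\theta$ holds on $B_r(x_0)$ for almost every $r$. Since the extraction argument only requires a \emph{sequence} of radii, it suffices to choose the $j_i$ inside the full-measure set of radii for which both the Gauss--Green formula and the coarea slicing are valid; no uniform regularity of the spheres is needed.
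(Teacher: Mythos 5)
Your proof is correct and is built on the same two pillars as the paper's: the algebraic identity $\a\wedge\w=\pm\,d(\a\wedge\theta)$ followed by Stokes, and the observation that sublinear growth of $r\mapsto\int_{B_{r}}|\a|$ forces some sequence of thin shells to carry vanishing mass. The implementations differ in one genuine way. The paper never touches the geodesic sphere: it multiplies $\a\wedge\theta$ by a smooth cutoff $f_{j}=\eta(\rho(x_{0},\cdot)-j)$ equal to $1$ on $B_{j}$ and supported in $B_{j+1}$, applies Stokes to the compactly supported form $f_{j}\,\a\wedge\theta$ on the open manifold, and ends up with the error term $\int_{B_{j+1}\setminus B_{j}}|\a|$ over a unit-width annulus; the extraction of the subsequence is then just the statement that a nonnegative sequence with Ces\`aro means tending to $0$ has a subsequence tending to $0$. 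You instead integrate over the sharp ball and push the boundary term onto $\partial B_{r}$, which forces you to confront the cut locus; you correctly flag this and the coarea/finite-perimeter repair you sketch does work (Gauss--Green on $\{\rho<r\}$ for a.e.\ $r$, plus $\liminf g=0$ from $\int_{0}^{r}g=o(r)$), but it imports geometric measure theory that the annulus trick renders unnecessary. One small dividend of your route: your estimate uses only the boundedness of $\theta$, whereas the paper's bound $\int_{B_{j+1}\setminus B_{j}}|f_{j}\b|\lesssim\int_{B_{j+1}\setminus B_{j}}|\a|$ implicitly also uses that $\w=d\theta$ is bounded (true in the application, where $\w=\ast\tilde{P}$ on a cylinder over a compact manifold, but worth noting). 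Either argument proves the lemma.
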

\begin{proof}
	Let $\eta:\mathbb{R}\rightarrow\mathbb{R}$ be smooth, $0\leq\eta\leq1$,
	\begin{equation*}
	\eta(t)=\left\{
	\begin{aligned}
	1, &  & t\leq0 \\
	0,  &  & t\geq1
	\end{aligned}
	\right.
	\end{equation*}
	and consider the compactly supported function
	\begin{equation*}
	f_{j}(x)=\eta(\rho(x_{0},x)-j),
	\end{equation*}
	where $j$ is a positive integer and $\rho(x_{0},x)$ stands for the Riemannian distance between $x$ and a base point $x_{0}$.
	
	We consider the form $\b:=\a\wedge\w=d(\a\wedge\theta)$. We have $f_{j}\b=d(f_{j}\a\wedge\theta)-df_{j}\wedge(\a\wedge\theta)$. Following Stokes formula, we obtain that
	\begin{equation*}
	|\int_{X}f_{j}\b|=|\int_{X}df_{j}\wedge(\a\wedge\theta)|\lesssim\int_{B_{j+1}\backslash B_{j}}|\a|.
	\end{equation*}
 Since $\theta$ is bounded, $f_{j}=1$ on $B_{j}$ and $f_{j}=0$ on $X\backslash B_{j+1}$, one obtains that
	\begin{equation*}
	\begin{split}
	|\int_{B_{j}}\b|&=|\int_{B_{j}}f_{j}\b|\leq|\int_{B_{j+1}}f_{j}\b|+|\int_{B_{j+1}\backslash B_{j}}f_{j}\b|\\
	&\leq|\int_{X}f_{j}\b|+\int_{B_{j+1}\backslash B_{j}}|f_{j}\b|\\
	&\lesssim|\int_{X}f_{j}\b|+\int_{B_{j+1}\backslash B_{j}}|\a|.\\
	\end{split}
	\end{equation*}
Thus
	\begin{equation}\label{E4.9}
	|\int_{B_{j}}\b|\lesssim\int_{B_{j+1}\backslash B_{j}}|\a|.
	\end{equation}
By the hypothesis (\ref{E4.7}), there exists a sequence $j_{i}\rightarrow\infty$  as $i\rightarrow\infty$ such that 
	\begin{equation}\label{E4.10}
	\lim_{i\rightarrow\infty}\int_{B_{j_{i}+1}\backslash B_{j_{i}}}|\a|=0.
	\end{equation}
	It now follow (\ref{E4.9})--(\ref{E4.10}) that $\lim_{i\rightarrow\infty}\int_{B_{j_{i}}(x_{0})}\a\wedge\w=0$.
\end{proof}
We denote  by $\ast$ the Hodge star operator on $Cyl(X)$, $D$ by the exterior derivative on $T^{\ast}(Cyl(X))$. We also denote $\tilde{P}=dt\wedge P$, $\tilde{Q}=dt\wedge Q$. Then the forms $\tilde{P}$, $\tilde{Q}$ satisfying 
\begin{equation*}
\ast\tilde{P}=\ast_{X}P,\  \ast\tilde{Q}=\ast_{X}Q,
\end{equation*}
and
\begin{equation*}
D\tilde{P}=4\tilde{Q},\ D\ast\tilde{Q}=(n-3)\ast\tilde{P}.
\end{equation*}
\begin{theorem}\label{T3}
	Let $Cyl(X)$ be the cylinder over a compact real Killing spinor manifold $X$, $\mathbf{A}$ be a smooth solution of instanton equation. If $\rho(\mathbf{A})=0$, then $\mathbf{A}$ is a flat connection.
\end{theorem}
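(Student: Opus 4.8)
The plan is to exploit the real Killing spinor identities to produce a $d$(bounded) form, feed the hypothesis $\rho(\mathbf{A})=0$ into Lemma \ref{L4.5}, and thereby show that $\mathbf{A}$ is translation invariant; the energy density then forces flatness directly. First I would record the instanton identity on $Cyl(X)$: since $\mathbf{A}$ solves (\ref{E0}), i.e. $\ast F_{\mathbf{A}}=-\ast\Om\wedge F_{\mathbf{A}}$ with $\ast\Om=\ast_{X}P+dt\wedge\ast_{X}Q$, one has
$$|F_{\mathbf{A}}|^{2}\,dt\wedge dvol_{X}=Tr(F_{\mathbf{A}}\wedge F_{\mathbf{A}})\wedge\ast\Om,$$
exactly as in the definition of $J(T)$ in Theorem \ref{T1}, and $\alpha:=Tr(F_{\mathbf{A}}\wedge F_{\mathbf{A}})$ is a closed $4$-form by the Bianchi identity. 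The crucial structural input is that the $(n-3)$-form $\ast_{X}P=\ast\tilde{P}$ is $d$(bounded): the relation $D\ast\tilde{Q}=(n-3)\ast\tilde{P}$ gives $\ast_{X}P=\frac{1}{n-3}D(\ast_{X}Q)$, and $\ast_{X}Q=\ast\tilde{Q}$ is a fixed smooth form pulled back from the compact $X$, hence bounded on $Cyl(X)$. By contrast $\ast\Om$ itself is not exact, because of the extra term $dt\wedge\ast_{X}Q$, so one cannot apply Lemma \ref{L4.5} to the whole of $\ast\Om$ at once.

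Next I would translate the vanishing of the energy density into hypothesis (\ref{E4.7}). Since every geodesic ball satisfies $B_{r}(x_{0})\subset(-r,r)\times X$ and $|Tr(F_{\mathbf{A}}\wedge F_{\mathbf{A}})|\lesssim|F_{\mathbf{A}}|^{2}$, we get
$$\frac{1}{r}\int_{B_{r}(x_{0})}|\alpha|\lesssim\frac{1}{r}\int_{(-r,r)\times X}|F_{\mathbf{A}}|^{2}=2\cdot\frac{1}{2r}\int_{(-r,r)\times X}|F_{\mathbf{A}}|^{2}\longrightarrow 2\rho(\mathbf{A})=0.$$
Thus $\alpha$ is closed and satisfies (\ref{E4.7}), while $\w:=\ast_{X}P=D(\frac{1}{n-3}\ast_{X}Q)$ is $d$(bounded). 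Lemma \ref{L4.5} then yields a sequence $j_{i}\to\infty$ with $\lim_{i\to\infty}\int_{B_{j_{i}}(x_{0})}Tr(F_{\mathbf{A}}\wedge F_{\mathbf{A}})\wedge\ast_{X}P=0.$

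The heart of the argument is to read off $\dot{A}$ from this pairing. Writing $F_{\mathbf{A}}=F_{A}+dt\wedge\dot{A}$ in temporal gauge one finds $Tr(F_{\mathbf{A}}\wedge F_{\mathbf{A}})\wedge\ast_{X}P=2\,dt\wedge Tr(\dot{A}\wedge F_{A})\wedge\ast_{X}P$ (the pure $X$-term $Tr(F_{A}\wedge F_{A})\wedge\ast_{X}P$ is an $(n+1)$-form with no $dt$, hence vanishes), and the first equation in (\ref{2.1}), $\ast_{X}\dot{A}=-\ast_{X}P\wedge F_{A}$, gives $\int_{X}Tr(\dot{A}\wedge F_{A})\wedge\ast_{X}P=\|\dot{A}\|_{L^{2}(X)}^{2}$. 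Hence over any slab
$$\int_{(-s,s)\times X}Tr(F_{\mathbf{A}}\wedge F_{\mathbf{A}})\wedge\ast_{X}P=2\int_{-s}^{s}\|\dot{A}(t)\|_{L^{2}(X)}^{2}\,dt\ge 0.$$
Since $B_{j_{i}}(x_{0})$ contains the slab $(-s_{i},s_{i})\times X$ with $s_{i}=\sqrt{j_{i}^{2}-(\mathrm{diam}\,X)^{2}}\to\infty$, and the remaining collar has $t$-width $j_{i}-s_{i}\to 0$, the collar contribution is bounded by the curvature energy in two thin bands and tends to $0$ (using $\rho(\mathbf{A})=0$ together with the pointwise bound of Corollary \ref{C1} via $\varepsilon$-regularity). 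Combining, $\int_{-s_{i}}^{s_{i}}\|\dot{A}\|_{L^{2}(X)}^{2}\to 0$; as the integrand is nonnegative and $s_{i}\to\infty$, this forces $\dot{A}\equiv 0$, so $\mathbf{A}$ is translation invariant.

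Finally, once $\dot{A}\equiv 0$ the curvature $F_{\mathbf{A}}=F_{A}$ is independent of $t$, so (\ref{E3}) gives $\rho(\mathbf{A})=\lim_{T\to\infty}\frac{1}{2T}\int_{(-T,T)\times X}|F_{A}|^{2}\,dt\wedge dvol_{X}=\|F_{A}\|_{L^{2}(X)}^{2}$; thus $\rho(\mathbf{A})=0$ yields $F_{A}=0$, whence $F_{\mathbf{A}}=0$ and $\mathbf{A}$ is flat. I expect the main obstacle to be the third step, namely passing from the merely subsequential ball limit produced by Lemma \ref{L4.5} to the genuine vanishing of $\dot{A}$: this requires the collar (geodesic-ball-versus-slab) estimate and the identification of $Tr(F_{\mathbf{A}}\wedge F_{\mathbf{A}})\wedge\ast_{X}P$ with $2\|\dot{A}\|_{L^{2}(X)}^{2}$ through the first equation of (\ref{2.1}); the remaining reduction to flatness via the energy density is then immediate.
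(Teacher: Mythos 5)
Your argument is essentially the paper's own proof: both use $\ast_{X}P=\frac{1}{n-3}D(\ast_{X}Q)$ to see that $\ast\tilde{P}$ is $d$(bounded), feed $\rho(\mathbf{A})=0$ into Lemma \ref{L4.5} applied to the closed form $Tr(F_{\mathbf{A}}\wedge F_{\mathbf{A}})$, identify $-Tr(F_{\mathbf{A}}^{2})\wedge\ast\tilde{P}=2|\dot{A}|^{2}dt\wedge dvol_{X}$ via the first equation of (\ref{2.1}) to conclude $\dot{A}\equiv0$, and then read off $F_{A}=0$ from (\ref{E3}). The only divergence is your ball-versus-slab collar step, and there the stated justification is not valid: $\rho(\mathbf{A})=0$ is only a Ces\`aro-type decay and does not control the energy of individual thin bands, so neither it nor Corollary \ref{C1} (which needs small $L^{2}$ energy on the band) bounds the collar contribution. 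Fortunately that step is unnecessary: since $-Tr(F_{\mathbf{A}}^{2})\wedge\ast\tilde{P}=2|\dot{A}|^{2}dt\wedge dvol_{X}$ is a nonnegative multiple of the volume form and $(-s_{i},s_{i})\times X\subset B_{j_{i}}(x_{0})$, the slab integral is dominated by the ball integral produced by Lemma \ref{L4.5}, which already tends to zero; alternatively, as the paper implicitly does, one can run the cutoff argument of Lemma \ref{L4.5} with the $1$-Lipschitz exhaustion $|t|$ in place of the distance function and work with slabs throughout.
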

\begin{proof}
	The Yang-Mills energy function is 
	\begin{equation}\nonumber
	\begin{split}
	YM(\mathbf{A}):&=\|F_{\mathbf{A}}\|^{2}_{L^{2}(Z)}=-\int_{\mathbb{R}\times X}Tr(F_{\mathbf{A}}\wedge F_{\mathbf{A}})\wedge\ast\Om\\
	&=-\int_{\mathbb{R}\times X}Tr(F_{\mathbf{A}}^{2})\wedge\ast\tilde{P}-\int_{\mathbb{R}\times X}Tr(F_{\mathbf{A}}^{2})\wedge\ast_{X}Q\wedge dt.\\
	\end{split}
	\end{equation}
	We observe that 
	\begin{equation*}
	-\int_{\mathbb{R}\times X}Tr(F_{\mathbf{A}}^{2})\wedge\ast\tilde{P}=-\frac{1}{n-3}\int_{\mathbb{R}\times X}Tr(F_{\mathbf{A}}^{2})\wedge D\ast\tilde{Q}.
	\end{equation*} 
	Since $\rho(\textbf{A})=0$ and $|Tr(F_{\textbf{A}}^{2})|\lesssim |F_{\textbf{A}}|^{2}$, we observe that
	\begin{equation}\label{E4.1}
	\lim_{T\rightarrow\infty}\frac{1}{T}\int_{(-T,T)\times X}|Tr(F_{\mathbf{A}}^{2})|=0.
	\end{equation} 
Since $Tr(F_{\mathbf{A}}^{2})$ is a closed $4$-form on $Cyl(X)$, it also satisfies Equation (\ref{E4.1}) and $\ast\tilde{P}$ is a $D$(bounded) $(n-4)$-form, following Lemma \ref{L4.5}, there exist a sequence $j_{i}\rightarrow\infty$ as $i\rightarrow\infty$  such that
	\begin{equation}\label{E4.11}
	\lim_{i\rightarrow\infty}\int_{(-j_{i},j_{i})\times X}Tr(F_{\mathbf{A}}^{2})\wedge\ast\tilde{P}=0.
	\end{equation}
	Following equation (\ref{2.1}), we then have
	\begin{equation*}
	-Tr(F_{\mathbf{A}}^{2})\wedge\ast\tilde{P}=-2Tr(\frac{\pa A}{\pa t}\wedge dt\wedge F_{A})\wedge\ast_{X}P=2|\frac{\pa A}{\pa t}|^{2}dt\wedge dvol_{X}
	\end{equation*}
	Thus
	\begin{equation}\label{E4.12}
	-\int_{(-j_{i},j_{i})\times X}Tr(F_{\mathbf{A}}^{2})\wedge\ast\tilde{P}=2\int_{(-j_{i},j_{i})\times X}|\frac{\pa A}{\pa t}|^{2}dt\wedge dvol_{X}.
	\end{equation}
	It now follows (\ref{E4.11}), (\ref{E4.12}) that 
	\begin{equation*}
	\lim_{i\rightarrow\infty}\int_{(-j_{i},j_{i})\times X}|\frac{\pa A}{\pa t}|^{2}dt\wedge dvol_{X}=0,
	\end{equation*}
	i.e., $\frac{\pa A}{\pa t}=0$. The connection $\mathbf{A}$ is not dependent on parameter $t$. Thus
	\begin{equation*}
	\rho(\mathbf{A})=\int_{X}|F_{A}|^{2}dvol_{X}.
	\end{equation*}
	By the hypothesis of energy density $\rho(\textbf{A})$, we obtain that $F_{A}=0$. We complete this proof.
\end{proof}
\begin{corollary}
	Let $Cyl(X)$ be the cylinder over a compact real Killing spinor  manifold $X$, $\mathbf{A}$ be a smooth solution of instanton equation (\ref{E0}). If the curvature $F_{\mathbf{A}}$ is in $L^{p}$, $p\geq2$, then $\mathbf{A}$ is a flat connection.	
\end{corollary}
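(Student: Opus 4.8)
The plan is to deduce the statement directly from Theorem \ref{T3}. Since that theorem already shows that a smooth instanton with vanishing energy density $\rho(\mathbf{A})=0$ must be flat, it suffices to prove that the hypothesis $F_{\mathbf{A}}\in L^{p}(Cyl(X))$ with $2\le p<\infty$ forces $\rho(\mathbf{A})=0$. In other words, the entire content of the corollary is the implication $L^{p}\text{-curvature}\Rightarrow\rho(\mathbf{A})=0$, after which Theorem \ref{T3} finishes the argument with no further work.

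First I would control the $L^{2}$-mass of $F_{\mathbf{A}}$ on the truncated cylinder $(-T,T)\times X$ by its $L^{p}$-mass via H\"older's inequality. Using the conjugate exponents $\frac{2}{p}+\frac{p-2}{p}=1$ together with the fact that $(-T,T)\times X$ has volume $2T\,\mathrm{Vol}(X)$, one obtains
\begin{equation*}
\int_{(-T,T)\times X}|F_{\mathbf{A}}|^{2}\,dt\wedge dvol_{X}\le\Big(\int_{(-T,T)\times X}|F_{\mathbf{A}}|^{p}\Big)^{\frac{2}{p}}\big(2T\,\mathrm{Vol}(X)\big)^{1-\frac{2}{p}}\le\|F_{\mathbf{A}}\|_{L^{p}(Cyl(X))}^{2}\big(2T\,\mathrm{Vol}(X)\big)^{1-\frac{2}{p}}.
\end{equation*}
Dividing by $2T$ and recalling the defining formula (\ref{E3}) of the energy density then yields
\begin{equation*}
\frac{1}{2T}\int_{(-T,T)\times X}|F_{\mathbf{A}}|^{2}\,dt\wedge dvol_{X}\le\|F_{\mathbf{A}}\|_{L^{p}(Cyl(X))}^{2}\,\mathrm{Vol}(X)^{1-\frac{2}{p}}\,(2T)^{-\frac{2}{p}}.
\end{equation*}

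Finally I would let $T\to\infty$. Because $2\le p<\infty$ we have $\frac{2}{p}>0$, so the factor $(2T)^{-\frac{2}{p}}$ tends to $0$ while $\|F_{\mathbf{A}}\|_{L^{p}(Cyl(X))}$ remains finite and fixed; hence $\rho(\mathbf{A})=0$. Applying Theorem \ref{T3} then shows that $\mathbf{A}$ is flat, which completes the proof. For $p=2$ this is nothing but the observation that a finite total energy $\int_{Cyl(X)}|F_{\mathbf{A}}|^{2}$, averaged over a window of length $2T$, decays like $1/T$.

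There is essentially no analytic obstacle here; the only points to watch are the exponent bookkeeping in H\"older's inequality and the fact that the argument genuinely needs $p$ finite. Indeed, for $p=\infty$ the same estimate yields only $\rho(\mathbf{A})\le\|F_{\mathbf{A}}\|_{L^{\infty}}^{2}\,\mathrm{Vol}(X)$, which does not force $\rho=0$, so the hypothesis $p\ge 2$ should be read as $2\le p<\infty$. The decisive mechanism is the competition between the linear growth in $T$ of the volume of the slab $(-T,T)\times X$ and the sublinear growth of the $L^{2}$-energy coming from $L^{p}$-integrability, which is exactly what drives the average to zero in the limit.
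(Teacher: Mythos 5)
Your proposal is correct and follows essentially the same route as the paper: H\"older's inequality on the slab $(-T,T)\times X$ gives $\|F_{\mathbf{A}}\|_{L^{2}(B_{T})}^{2}\le\|F_{\mathbf{A}}\|_{L^{p}}^{2}(2T\,\mathrm{Vol}(X))^{1-2/p}$, so dividing by $2T$ forces $\rho(\mathbf{A})=0$ and Theorem \ref{T3} applies. Your added remark that the argument requires $p<\infty$ is a sensible clarification of the hypothesis, but otherwise the two proofs coincide.
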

\begin{proof}
	We denote $B_{T}=(-T,T)\times X$. For $p=2$, it is easy to see $\rho(\mathbf{A})=0$. For $p>2$, we use the H\"{o}lder inequality,
	\begin{equation*}
	\|F_{\mathbf{A}}\|_{L^{2}(B_{T})}\leq\|F_{\mathbf{A}}\|_{L^{p}(B_{
			T})}(2T\rm{Vol}(X))^{\frac{1}{2}-\frac{1}{p}}.
	\end{equation*} 
	Thus $\rho(\mathbf{A})=0$. Following Theorem \ref{T3}, it implies that $\mathbf{A}$ is flat.
\end{proof}
We define $\mathcal{M}_{d}$ as the space of the gauge equivalence classes of instantons $\textbf{A}$ on $P$ satisfying
\begin{equation*}
\|F_{\textbf{A}}\|_{L^{\infty}(X)}\leq d.
\end{equation*}
The space $\mathcal{M}_{d}$ is endowed with the topology of $C^{\infty}$ convergence over compact subsets: the sequence $\{[A_{i}]\}$ in  $\mathcal{M}_{d}$  converges to $[A]$ if and only if there exist gauge transformations $g_{i}$ satisfying $g^{\ast}_{i}(A_{i})\rightarrow A$ in $C^{\infty}$ over every compact subset of $X$. The space $\mathcal{M}_{d}$ is compact by the Uhlenbeck compactness theorem. We denote by $\rho(d)$ the value of $\rho(A)$ over $[A]\in\mathcal{M}_{d}$. At the end of $Z$, there are  connections $\Ga_{\pm}$ over $X$ such that $\mathbf{A}$ converges to $\Ga_{\pm}$, i.e. the restriction $\mathbf{A}|_{X\times\{T\}}$ converges (modulo gauge equivalence) to $\Ga_{\pm}$ in $C^{\infty}$ over $X$ as $T\rightarrow\pm\infty$
\begin{proposition}
	Suppose that $[\textbf{A}]\in\mathcal{M}_{d}$ is an instanton over $Cyl(X)$. Then
	\begin{equation*}
	\rho(d)=\frac{\rho_{-}+\rho_{+}}{2},
	\end{equation*}
	where $\rho_{\pm}=\int_{X}Tr(F_{\Ga_{\pm}}^{2}\wedge\ast_{X}Q)$. 
\end{proposition}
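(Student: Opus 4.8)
The plan is to decompose the curvature density on the cylinder into its two natural pieces and to show that, under the average defining $\rho$, only the piece governed by the limit connections $\Ga_{\pm}$ survives. In the temporal gauge $F_{\mathbf{A}}=F_{A}+dt\wedge\dot{A}$, so $|F_{\mathbf{A}}|^{2}=|F_{A}|^{2}+|\dot{A}|^{2}$. The computation already carried out inside the proof of Theorem \ref{T1}, which uses only the instanton equations (\ref{2.1}) and not the \textbf{good} hypothesis, gives for each fixed $t$
\begin{equation*}
\|F_{A(t)}\|^{2}_{L^{2}(X)}=\|\dot{A}(t)\|^{2}_{L^{2}(X)}+\int_{X}Tr(F_{A(t)}\wedge F_{A(t)})\wedge\ast_{X}Q.
\end{equation*}
Writing $g(t):=\int_{X}Tr(F_{A(t)}^{2})\wedge\ast_{X}Q$ and integrating $|F_{\mathbf{A}}|^{2}$ over the slice $X\times\{t\}$ then yields $\int_{X}|F_{\mathbf{A}}|^{2}\,dvol_{X}=2\|\dot{A}(t)\|^{2}_{L^{2}(X)}+g(t)$. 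Substituting into (\ref{E3}) reduces the proposition to
\begin{equation*}
\rho(\mathbf{A})=\lim_{T\rightarrow\infty}\frac{1}{T}\int_{-T}^{T}\|\dot{A}(t)\|^{2}_{L^{2}(X)}\,dt+\lim_{T\rightarrow\infty}\frac{1}{2T}\int_{-T}^{T}g(t)\,dt,
\end{equation*}
once I check that the two limits exist separately.

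First I would dispose of the $\dot{A}$-average. The first equation of (\ref{2.1}) gives $\|\dot{A}(t)\|^{2}_{L^{2}(X)}=\|F_{A(t)}\wedge\ast_{X}P\|^{2}_{L^{2}(X)}$, so the energy identity of Proposition \ref{P4} reads
\begin{equation*}
\int_{t_{1}}^{t_{2}}\|\dot{A}(t)\|^{2}_{L^{2}(X)}\,dt=CS(A(t_{1}))-CS(A(t_{2})).
\end{equation*}
Since $\mathbf{A}|_{X\times\{t\}}$ converges in $C^{\infty}$ to $\Ga_{\pm}$ as $t\rightarrow\pm\infty$, the gauge-invariant functional $CS(A(t))$ converges to the finite values $CS(\Ga_{\pm})$, whence $\int_{-\infty}^{\infty}\|\dot{A}(t)\|^{2}_{L^{2}(X)}\,dt=CS(\Ga_{-})-CS(\Ga_{+})<\infty$. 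A convergent integral divided by $T$ tends to $0$, so the first average vanishes.

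Finally I would evaluate the $g$-average by a two-sided Ces\`{a}ro argument. The $C^{\infty}$ convergence $F_{A(t)}\rightarrow F_{\Ga_{\pm}}$ on the compact manifold $X$ forces $g(t)\rightarrow\int_{X}Tr(F_{\Ga_{\pm}}^{2})\wedge\ast_{X}Q=\rho_{\pm}$ as $t\rightarrow\pm\infty$, and $g$ is bounded because $\|F_{\mathbf{A}}\|_{L^{\infty}(X)}\leq d$ on $\mathcal{M}_{d}$ while $\ast_{X}Q$ is smooth on a compact manifold. Splitting $\frac{1}{2T}\int_{-T}^{T}g\,dt=\frac{1}{2T}\int_{0}^{T}g\,dt+\frac{1}{2T}\int_{-T}^{0}g\,dt$ and using that the Ces\`{a}ro mean of a function with a limit at infinity equals that limit, the two halves tend to $\rho_{+}/2$ and $\rho_{-}/2$. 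Hence $\rho(\mathbf{A})=(\rho_{+}+\rho_{-})/2$, which is exactly the value $\rho(d)$. As a consistency check, (\ref{E4.6}) gives $g(t)=-2(n-3)CS(A(t))$, so the identity can also be read as $\rho(d)=-(n-3)\left(CS(\Ga_{+})+CS(\Ga_{-})\right)$. The main obstacle is coordinating Proposition \ref{P4} with the $C^{\infty}$ decay at the ends: it is precisely the finiteness of the total $\dot{A}$-energy, guaranteed by convergence of $CS(A(t))$, that makes the flow part $\dot{A}$ invisible to the density, so that $\rho(d)$ is detected solely by the boundary instantons $\Ga_{\pm}$ through $g$.
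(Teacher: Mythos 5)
Your proposal is correct and follows essentially the same route as the paper: decompose the energy density via the instanton equations into the $\ast_{X}Q$-term plus a cross term controlled by the Chern--Simons functional through the energy identity of Proposition \ref{P4}, kill the latter after dividing by $2T$, and evaluate the two-sided Ces\`{a}ro mean of $g(t)=\int_{X}Tr(F_{A(t)}^{2})\wedge\ast_{X}Q$ using convergence to $\Ga_{\pm}$. The only (inessential) difference is how the Chern--Simons contribution is discarded: the paper uses the uniform bound $|CS(A(t))|\lesssim d^{2}$ coming directly from $[\textbf{A}]\in\mathcal{M}_{d}$, whereas you use convergence of $CS(A(t))$ to finite limits at the ends; both suffice.
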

\begin{proof}
	Following equations (\ref{E0}), we have
	\begin{equation*}
	F_{\textbf{A}}^{2}\wedge\ast\Om=F_{A}^{2}\wedge dt\wedge\ast_{X}Q+2\frac{\pa A}{\pa t}\wedge dt\wedge F_{A}\wedge \ast_{X}P.
	\end{equation*}
Following the energy identity in Proposition \ref{P4}, it implies that
	\begin{equation*}
	\int_{(T,T')\times X}|F_{\textbf{A}}|^{2}dt\wedge dvol_{X}
	=\int_{(T,T')}dt\int_{X}Tr(F_{A}^{2}\wedge\ast_{X}Q)+2(CS(A(T))-CS(A(T'))).
	\end{equation*}
	Since $[\textbf{A}]\in\mathcal{M}_{d}$, we have
	$$|CS(A)|=|\frac{1}{2(n-3)}\int_{X}Tr(F_{A}\wedge F_{A})\wedge\ast_{X}Q|\lesssim d^{2}, \forall t\in\mathbb{R}.$$ 
	Thus
	\begin{equation*}
	\rho(d)=\lim_{t\rightarrow\infty}\frac{1}{2T}\int_{(-T,T)\times X}Tr(F_{A}^{2}\wedge\ast_{X}Q)=\frac{\rho_{-}+\rho_{+}}{2}.
	\end{equation*}
	We complete this proof.
\end{proof}
\section*{Acknowledgements}
I would like to thank the anonymous referee for a careful reading of my manuscript and helpful comments. This work is supported by Nature Science Foundation of China No. 11801539 and Postdoctoral Science Foundation of China No. 2017M621998, No. 2018T110616.

\bigskip
\footnotesize

\end{document}